\newtheorem{thm}{Theorem}
\newtheorem{cor}{Corollary}
\newtheorem{lem}{Lemma}
\newtheorem{prop}{Proposition}
\theoremstyle{definition}
\theoremstyle{remark}
\newtheorem{ob}{Observation}
\newcommand{\R}{{\mathbb{R}}}
\begin{document}

\title{Knotting and linking in the Petersen family}         
\author{Danielle O'Donnol}        
\date{}          
\maketitle

\begin{abstract}  This paper focuses on the graphs in the Petersen family, the set of minor minimal intrinsically linked graphs.  We prove there is a relationship between algebraic linking of an embedding and knotting in an embedding.  We also present a more explicit relationship for the graph $K_{3,3,1}$ between knotting and linking, which relates the sum of the squares of linking numbers of links in the embedding and the second coefficient of the Conway polynomial of certain cycles in the embedding.  
\end{abstract}

\section{Introduction}
Throughout this paper we will work with finite simple graphs, in the piecewise linear category.  A \emph{spatial graph} is an embedding of a graphs $G$ into $\R^3$ or $S^3$, denoted $f(G)$ or simply $f$.  This paper focuses on 
the interaction between knotting and linking in spatial graphs.  A link is said to be in a spatial graph if the link appears as a set of the embedded cycles.  An embedding $f$ of a graph $G$ is  \emph{linked} 
if there is a nontrivial link in $f(G).$  An 
embedding $f$ of a graph $G$ is \emph{algebraically linked} if there is a  link with nonzero linking number 
in $f(G).$ We will say an embedding $f$ of a graph $G$ is \emph{complexly algebraically linked} (\emph{CA linked}) if there is a link $L$ in $f(G)$ 
where $|lk(L)|\geq 2$ or if there is more than one link in $f(G)$ with nonzero linking number.  
An embedding $f$ of a graph $G$ is  \emph{knotted} if there 
is a nontrivial knot in $f(G).$  An embedding that is not knotted is called \emph{knotless}.  

\begin{figure}[h]
\begin{center}
\includegraphics[scale=0.5]{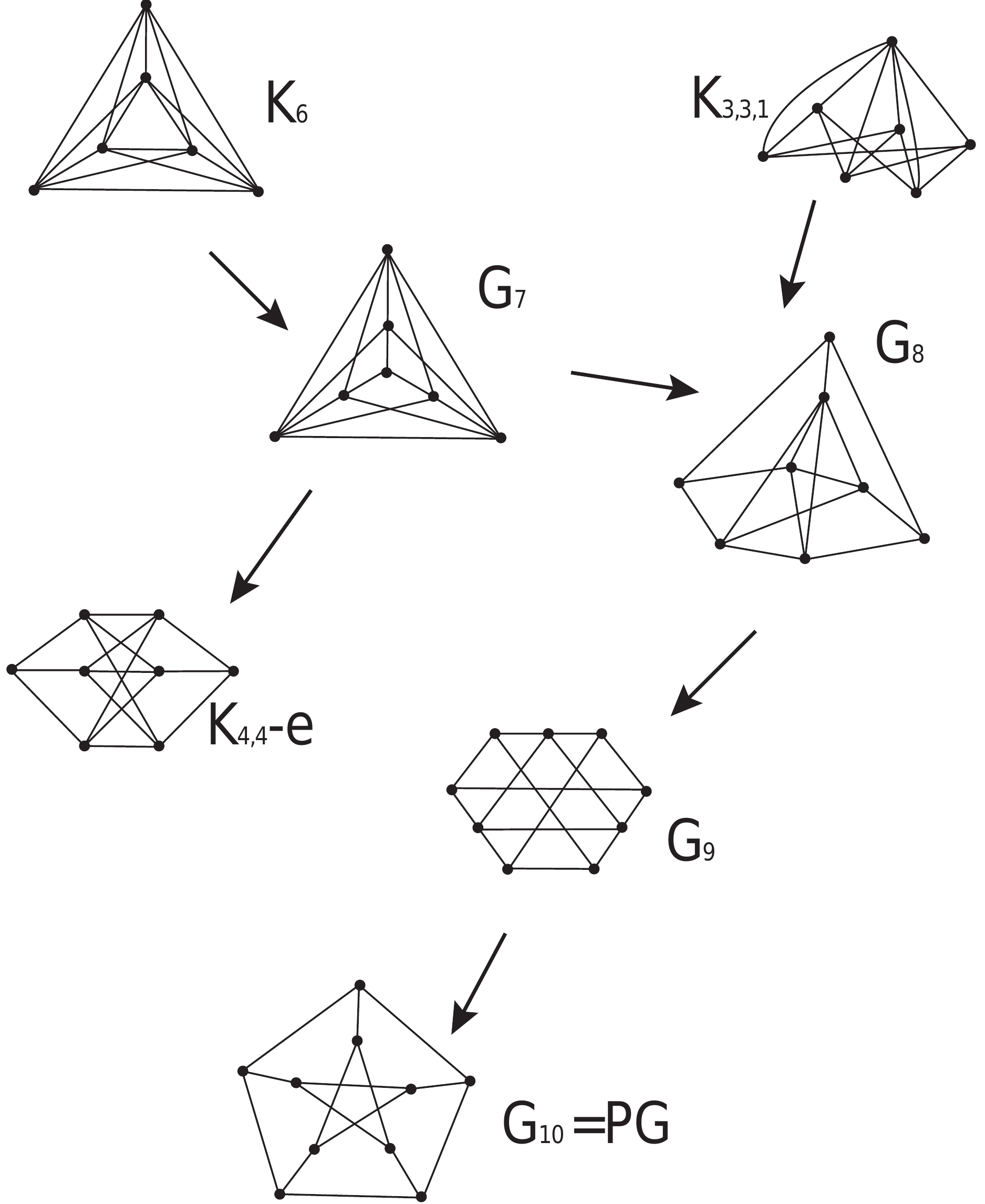}
\caption{The graphs of the Petersen family.  The arrows indicate a $\nabla$Y-move.  }\label{PF}
\end{center}
\end{figure}

A graph, $G$, is \emph{intrinsically knotted} if every embedding of
$G$ into $\R^3$ or $S^3$ contains a nontrivial knot.  A graph, $G$, is \emph{intrinsically linked} if every embedding of
$G$ into $\R^3$ or $S^3$ contains a non-split link.  The combined work of Conway and Gordon
\cite{CG}, Sachs \cite{Sa}, and  Robertson, Seymour, and
Thomas \cite{RST} fully characterize intrinsically linked graphs.
They showed that the Petersen family is the complete set of 
minor minimal intrinsically linked graphs, thus any intrinsically linked graph
contains a graph in the Petersen family as a minor. The Petersen family is a set of seven graphs shown in Figure \ref{PF}.  They are related by $\nabla$Y-moves (Figure \ref{TY}) as indicated by the arrows in Figure \ref{PF}.  We will denote this set of graphs by $\mathcal{PF}$.  
The set of intrinsically knotted graphs has not been fully characterized, however it is known that every intrinsically knotted graph is intrinsically linked.  This is a consequence of the work on characterizing intrinsically linked graphs \cite{RST}.  The converse does not hold, there are many graphs that are intrinsically linked graphs that have knotless embeddings.  In particular, none of the graphs of $\mathcal{PF}$ are intrinsically knotted.

In this paper we will examine the relationship between knotting and linking in the Petersen family.  One might expect that a knotted embedding would be an embedding with more complex linking.  However there are knotted embeddings of $K_6$ that contain only a single Hopf link, see Figure \ref{K6}.  The question of when complexity in linking of any embedding can guaranty that the embedding is knotted, is much more fruitful.  
We prove:
\begin{thm}\label{main} If $f$ is a CA linked embedding of $G\in\mathcal{PF}$, then $f(G)$ is knotted.  
\end{thm}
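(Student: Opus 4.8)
The plan is to prove this for one graph in the Petersen family and then propagate the result through the family using the $\nabla$Y-moves that connect all seven graphs. Since the Petersen family is generated from $K_6$ (or equivalently the Petersen graph) by $\nabla$Y-moves, the natural strategy is: (1) establish the theorem directly for a convenient base graph, and (2) show that both the hypothesis (being CA linked) and the conclusion (being knotted) are compatible with $\nabla$Y-moves in the right direction, so that the result transfers along the arrows of Figure \ref{PF}.

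For the base case I would work with $K_6$, where the Conway--Gordon formula gives the central algebraic tool. Recall that for any embedding $f$ of $K_6$ one has
\begin{equation}
\sum_{L} lk(L)^2 \equiv 1 \pmod 2,
\end{equation}
where the sum ranges over all pairs of disjoint triangles in $f(K_6)$. More relevantly, there is a companion Conway--Gordon style identity relating the summed squared linking numbers over disjoint-triangle pairs to the sum of the second Conway coefficients $a_2(K)$ over the Hamiltonian cycles $K$ of $K_6$. The idea is that if $f$ is CA linked—meaning either some link has $|lk|\ge 2$ or at least two disjoint-triangle pairs have nonzero linking number—then the total $\sum lk(L)^2$ is at least $2$ (indeed $\ge 4$ in the first case, $\ge 2$ in the second). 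Feeding this into the integral Conway--Gordon relation forces $\sum_K a_2(K) \ne 0$, so some Hamiltonian cycle $K$ has $a_2(K)\ne 0$; such a cycle is necessarily a nontrivial knot, giving knotting. I expect this arithmetic—pinning down the exact Conway--Gordon identity with the correct constant and showing the CA-linked hypothesis pushes the linking side past the threshold that forces a nonzero knotting term—to be the technical heart of the base case.

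For the inductive propagation I would argue that a $\nabla$Y-move can only preserve or simplify the linking and knotting structure in a controlled way. Concretely, a $\nabla$Y-move replaces a triangle by a vertex of degree three (a $Y$), and every cycle in the post-move graph corresponds to a cycle or a pair of cycles in the pre-move graph; crucially, a $\nabla$Y-move does not create new knotting or linking that was not already present, and conversely a CA linked embedding of a graph obtained by a $\nabla$Y-move lifts to (or descends from) a CA linked embedding of its neighbor with the knotting preserved. Thus knotting and the CA-linked condition both behave monotonically along the arrows, letting the base-case conclusion propagate to all of $\mathcal{PF}$.

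The main obstacle, I expect, is the bookkeeping in the $\nabla$Y transfer step: one must check carefully that the specific quantities used in the base argument—sums of squared linking numbers over disjoint cycle pairs and the second Conway coefficient over the relevant cycles—transform predictably under a $\nabla$Y-move, and that the CA-linked hypothesis is neither destroyed nor its strength diluted below the threshold needed to force $\sum a_2 \ne 0$ in each target graph. In particular, graphs like $K_{3,3,1}$ and $K_{4,4}$ minus a perfect matching have different cycle structures than $K_6$, so I anticipate needing graph-specific versions of the Conway--Gordon identity (the authors apparently single out $K_{3,3,1}$ for an especially explicit such relation), rather than a single uniform formula covering the whole family at once.
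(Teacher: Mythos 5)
Your overall architecture does match the paper's: the base case for $K_6$ is exactly Nikkuni's refined Conway--Gordon identity (note the correct identity is $\sum_{\lambda\in\Lambda(K_6)} lk(f(\lambda))^2 = 2\bigl(\sum_{\gamma\in\Gamma_H}a_2(f(\gamma))-\sum_{\gamma\in\Gamma_5}a_2(f(\gamma))\bigr)+1$, so CA linking forces $a_2\neq 0$ for some Hamiltonian \emph{or} 5-cycle, not just a Hamiltonian one), followed by propagation along $\nabla$Y-moves. The genuine gap is your opening premise that ``the Petersen family is generated from $K_6$ by $\nabla$Y-moves,'' which is false and would make your single-base-case plan fail: $K_{3,3,1}$ is not obtainable from $K_6$ by $\nabla$Y-moves (one $\nabla$Y applied to $K_6$ yields degree sequence $5,5,5,4,4,4,3$, and further moves only lower maximum degree, while $K_{3,3,1}$ has a vertex of degree $6$). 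In fact $K_{3,3,1}$ is a second \emph{source} of the family alongside $K_6$, which is why the paper states that every other member descends from $K_6$ \emph{or} $K_{3,3,1}$, and why it must prove a second, genuinely new identity for $K_{3,3,1}$: $\sum_{\lambda\in\Gamma_{3,4}} lk(f(\lambda))^2 = 2\bigl(\sum_{\gamma\in\Gamma_H}a_2(f(\gamma))-2\sum a_2(f(\gamma))-\sum a_2(f(\gamma))\bigr)+1$, where the middle sum runs over 6-cycles avoiding the degree-9 vertex $A$ and the last over 5-cycles through $A$. Establishing this via Wu-invariant and spatial-graph-homology machinery is the bulk of the paper; your closing hedge about graph-specific identities gestures at it, but your plan never commits to it as a mandatory second base case.

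Your anticipated ``main obstacle''---tracking how the squared-linking and $a_2$ sums transform under $\nabla$Y---is also misplaced in an instructive way: the paper's transfer lemma is entirely soft and requires no such bookkeeping, which is precisely what lets it avoid identities for the five non-source graphs. Given a CA linked embedding $f$ of the post-move graph $G'$, one builds an embedding $\bar f$ of the pre-move graph $G$ by sending the triangle into a tubular neighborhood of $f(Y)$; every link of $f(G')$ with nonzero linking number persists verbatim in $\bar f(G)$, so $\bar f(G)$ is CA linked, hence knotted by the inductive hypothesis, and the knotted cycle cannot be the triangle itself (it bounds an embedded disk), so it survives unchanged as a cycle of $f(G')$. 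Thus the abstract property ``CA linked implies knotted'' transfers directly along $\nabla$Y-moves with no quantitative identity needed. (A peripheral slip: the Petersen-family member is $K_{4,4}$ minus a single edge; $K_{4,4}$ minus a perfect matching is the planar cube graph and is not intrinsically linked.)
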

\noindent This result gives an algebraic linking condition on the embedding that will result in a knotted embedding.  Another natural question is whether the presences of additional links with linking number 0, or more complex links with linking number 1 would guarantee in a knotted embedding.  In Section \ref{eg}, we give examples of embeddings of $K_6$ suggesting that such geometric linking will not guarantee a knotted embedding.

Theorem \ref{main} rests on understanding the interactions between linking and knotting in $\mathcal{PF}$.  In keeping with the notation of Nikkuni \cite{Ni}, let $\Gamma(G)$ denote 
the set of all cycles (or simple closed curves) in $G$, let $\Gamma_H(G)$ be the set of all Hamiltonian cycles 
in $G$, let $\Gamma_m(G)$ be the set of all $m$-cycles in $G$, 
let $\Gamma_{s,t}(G)$ be the set of all pairs of disjoint $s$-cycles and $t$-cycles, and let $\Lambda(G)$ be the set of all 
pairs of disjoint cycles.  Recently, Nikkuni proved the following theorem relating the linking and knotting in an embedding of $K_6\in\mathcal{PF}$:

\begin{thm} \label{nikkuni}\textup{\cite{Ni}} For any embedding $f$ of $K_6$ into $\R^3$ or $S^3$ the following holds:
$$\sum_{\lambda\in\Lambda(K_6)} lk(f(\lambda))^2= 2\Big(\sum_{\gamma\in\Gamma_H}a_2(f(\gamma))-\sum_{\gamma\in\Gamma_5}a_2(f(\gamma))\Big)+1.$$
\end{thm}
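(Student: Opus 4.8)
The plan is to show that the integer
\[
\Phi(f) \;=\; \sum_{\lambda\in\Lambda(K_6)} lk(f(\lambda))^2 \;-\; 2\Big(\sum_{\gamma\in\Gamma_H}a_2(f(\gamma))-\sum_{\gamma\in\Gamma_5}a_2(f(\gamma))\Big)
\]
is independent of the embedding $f$, and then to evaluate it on one convenient embedding to see it equals $1$. Since any two embeddings of $K_6$ into $\R^3$ are related by a finite sequence of crossing changes and ambient isotopies, and since $lk$ and $a_2$ are ambient isotopy invariants, it suffices to prove that $\Phi$ is unchanged under a single crossing change. Note that $\Lambda(K_6)=\Gamma_{3,3}(K_6)$, since two disjoint cycles in $K_6$ must be a pair of triangles, so there are exactly ten terms in the first sum.

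The two skein-theoretic facts I would use are: (i) a crossing change between the two components of a two-component link alters $lk$ by $\pm1$, so a term $\ell^2$ becomes $(\ell\pm1)^2=\ell^2\pm2\ell+1$; and (ii) by the Conway skein relation, a self-crossing change of a knot changes $a_2$ by $\pm lk(L_0)$, where $L_0$ is the oriented smoothing of that crossing, a two-component link. A crossing change in $f(K_6)$ occurs between arcs of two edges $e_1,e_2$, and I would split the analysis according to whether $e_1$ and $e_2$ share an endpoint. If they share a vertex, then no pair of disjoint triangles has $e_1$ and $e_2$ in different components, so $\sum lk^2$ is unchanged; moreover the oriented smoothing of any cycle through both $e_1$ and $e_2$ splits off a trivial loop near the shared vertex, forcing $lk(L_0)=0$ for every affected $a_2$-term. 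Hence $\Phi$ is unchanged in this case.

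The substantive case is $e_1=ab$, $e_2=cd$ disjoint, with remaining vertices $e,f$. Here exactly two pairs of disjoint triangles are affected, namely $(\{a,b,e\},\{c,d,f\})$ and $(\{a,b,f\},\{c,d,e\})$, and with a common sign $\epsilon=\pm1$ the first sum changes by $2\epsilon(\ell_1+\ell_2)+2$, where $\ell_1,\ell_2$ are the two linking numbers. On the other side I would enumerate the cycles through both $ab$ and $cd$: there are twelve Hamiltonian cycles and eight $5$-cycles, and for each I would use (ii) to express its $a_2$-change as $\pm lk$ of its oriented smoothing, whose two components are the two arcs into which $ab$ and $cd$ cut the cycle. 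The aim is to show that $2\big(\Delta\sum_{\gamma\in\Gamma_H}a_2-\Delta\sum_{\gamma\in\Gamma_5}a_2\big)$ equals $2\epsilon(\ell_1+\ell_2)+2$, so that $\Delta\Phi=0$.

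The main obstacle is precisely this matching in the disjoint-edge case. One must identify the linking number of each of the twenty smoothed links in terms of the triangle linking numbers $\ell_1,\ell_2$, keep the orientation-induced signs consistent across all twelve Hamiltonian cycles and eight $5$-cycles, and verify that the $5$-cycle sum is exactly the correction cancelling the spurious contributions of the Hamiltonian smoothings, leaving $\epsilon(\ell_1+\ell_2)$ together with the constant $+1$. Once $\Delta\Phi=0$ is established, I would finish by evaluating on the standard Conway--Gordon embedding $f_0$ of $K_6$, in which every cycle is unknotted and exactly one pair of disjoint triangles forms a Hopf link: there $\sum_{\lambda} lk(f_0(\lambda))^2=1$ and every $a_2(f_0(\gamma))=0$, so $\Phi(f_0)=1$, which yields the stated identity.
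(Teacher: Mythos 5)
There is a genuine gap, in fact two. First, your disposal of the shared-endpoint case is based on a false claim. When a crossing change occurs between two arcs of the same edge or of two adjacent edges, the oriented smoothing of an affected cycle does \emph{not} split off a loop with vanishing linking number: the small component $c_0$ of $L_0$ consists of the portions of the edge(s) between the crossing and the shared vertex, and nothing forces the crossing to occur near that vertex. A concrete counterexample: tie a local trefoil in a single edge $e$ of $K_6$ and undo it by one self-crossing change of $e$. Here $lk(L_0)=\pm 1$, and $a_2$ changes by $\pm 1$ for \emph{every} cycle through $e$ --- all $24$ Hamiltonian cycles and all $24$ five-cycles containing $e$. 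The quantity $\Phi$ is still unchanged, but only because $24-24=0$; more generally (e.g.\ for adjacent edges, where $6$ Hamiltonian cycles and $6$ five-cycles contain the path through the shared vertex, and the smoothed linking numbers $lk(c_0,\gamma_0)$ genuinely vary with $\gamma$) invariance requires a signed cancellation \emph{across} the weighted sums, say by expressing each $lk(c_0,\gamma_0)$ as an intersection number with a fixed Seifert surface for $c_0$ and checking that the signed edge-multiplicities in the Hamiltonian sum and the five-cycle sum agree. That bookkeeping is absent, and the term-by-term vanishing you assert in its place is simply wrong.

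Second, the disjoint-edge case --- which you correctly identify as the substantive one, with its $12$ Hamiltonian cycles and $8$ five-cycles through the two edges --- is announced but never carried out: you state ``the aim is to show'' that the twenty smoothed linking numbers combine to $\epsilon(\ell_1+\ell_2)+1$, which is precisely the content of the theorem at that crossing, so as written the proposal assumes its crux rather than proving it. For comparison: the paper does not prove this statement at all (it is quoted from Nikkuni), but its proof of the analogous $K_{3,3,1}$ identity (Theorem \ref{k331}) proceeds quite differently, via Taniyama's homology classification of spatial embeddings by the Wu invariant: every embedding is spatial-graph-homologous to a standard embedding parametrized by finitely many twist integers, both sides of the identity are homology invariants, and the relation $\alpha(f)=(\mathcal{L}(f)^2-1)/8$ on $K_{3,3}$ subgraphs turns the verification into a closed-form polynomial computation in those integers. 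That route packages all of the delicate per-crossing cancellation you are struggling with into two citable facts (homology invariance of $lk$ and of $\alpha$-type sums of $a_2$), and is the natural way to repair or replace your induction; alternatively, your skein approach could in principle be completed, but only by actually performing the signed enumeration in both crossing-change cases.
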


\noindent Following similar methods, in Section \ref{wu}, Theorem \ref{k331} we obtain a similar result for the graph $K_{3,3,1}$.  We show for every embedding $f$ of $K_{3,3,1}$ that $$\sum_{\lambda\in\Gamma_{3,4} (K_{3,3,1})} lk(f(\lambda))^2=2\Big(\sum_{\gamma\in\Gamma_H}a_2(f(\gamma))-2\sum_{\substack{\gamma\in\Gamma_6\\A\notin\gamma}}a_2(f(\gamma))-\sum_{\substack{\gamma\in\Gamma_5\\A\in\gamma}}a_2(f(\gamma))\Big)+1,$$  
where $A$ is the single vertex of valance 9 in $K_{3,3,1}$.  This gives an explicit connection between linking and knotting in embeddings of $K_{3,3,1}\in\mathcal{PF}.$

\smallskip
\noindent {\bf Acknowledgements:} The author would like to thank Ryo Nikkuni, Kouki Taniyama, and Tim Cochran for many useful conversations.  


\section{Graph homologous embeddings and the Wu Invariant}\label{wu}
This sections contains a brief introduction to the Wu invariant, and graph-homologous embeddings.  Then these tools, along with useful relationships between the Wu invariant, the $\alpha-invariant$, and the second coefficient of the Conway polynomial, are used to obtain Theorem \ref{k331}, relating the linking and knotting in embeddings of $K_{3,3,1}$.  

Let $G$ be a graph with $E(G)=\{e_1,\dots, e_n\}$ and $V(G)=\{v_1,\dots, v_m\}$ (fixed ordering), and a fixed orientation on each of the edges.  Note, $G$ is a finite one-dimensional simplicial complex.  For a simplicial complex $X,$ let $P_2(X)=\{s_1\times s_2|s_1,s_2\in X, s_1\cap s_2=\emptyset\}$ be the \emph{polyhedral residual space} of $X$.    Let $\sigma$ be the involution on $P_2(X)$, i.e. $\sigma(s_1\times s_2)=s_2\times s_1$.  Let $f$ be an embedding of $G$ into $\R^3$.  The Wu invariant of $f$, denoted $\mathcal{L}(f)$ is in the second skew-symmetric cohomology group $H^2(P_2(G),\sigma),$ which we will denote $L(G).$  For more background on the Wu invariant and a more general approach see \cite{RN, Ta, T, Wu}

Following \cite{T}, Section 2, there is explicit presentation of $L(G).$  An orientation of a 2-cell $e_i\times e_j\in P_2(G)$ is given by the ordered pair of orientations of $e_i$ and $e_j$.  Let $E_{e_ie_j}=e_i\times e_j+e_j\times e_i\in C_2(P_2(G))$ for $e_i\cap e_j=\emptyset$ $(1\leq i<j \leq n).$  The set $\{E_{e_ie_j}|1\leq i<j \leq n, e_i\cap e_j=\emptyset\}$ is a free basis for $C_2(P_2(G),\sigma).$ Now the set of dual elements $\{E^{e_ie_j}|1\leq i<j \leq n, e_i\cap e_j=\emptyset\}$ generate $L(G)$.   The relations are given by the coboundary applied to the set $\{V^{e_iv_s}|1\leq i \leq n, 1\leq s \leq m,  v_s\not\in e_i\}.$  The coboundary is defined by: 
$$\delta^1(V^{e_iv_s})=\sum_{I(e_j)=v_s}E^{\rho(e_ie_j)}-\sum_{T(e_j)=v_s}E^{\rho(e_ie_j)},$$ 
where $I(e_i)$ is the initial vertex of $e_j$, $T(e_j)$ is the terminal vertex of $e_j$ and $\rho(e_ie_j)$ is the standard ordering $e_ie_j$ if $i<j$ and $e_je_i$ if $j<i$.  The Wu invariant $\mathcal{L}(f)$ can be calculated from a projection $\lambda :\R^3\to\R^2$ where $\lambda\circ f$ is a regular projection with finitely many multiple points all of which are transverse double points that occur away from vertices.  Let $a_{ij}(f)$ be the sum of the signs of the crossings that occur between $\lambda\circ f(e_i)$ and $\lambda\circ f(e_j)$, the Wu invariant $\mathcal{L}(f)$ is the coset of $\sum a_{ij}(f)E^{e_ie_j}$ in $L(G),$ which is summed over all pairs of disjoint edges of $G$.

\begin{figure}[h]
\begin{center}
\includegraphics[scale=0.7]{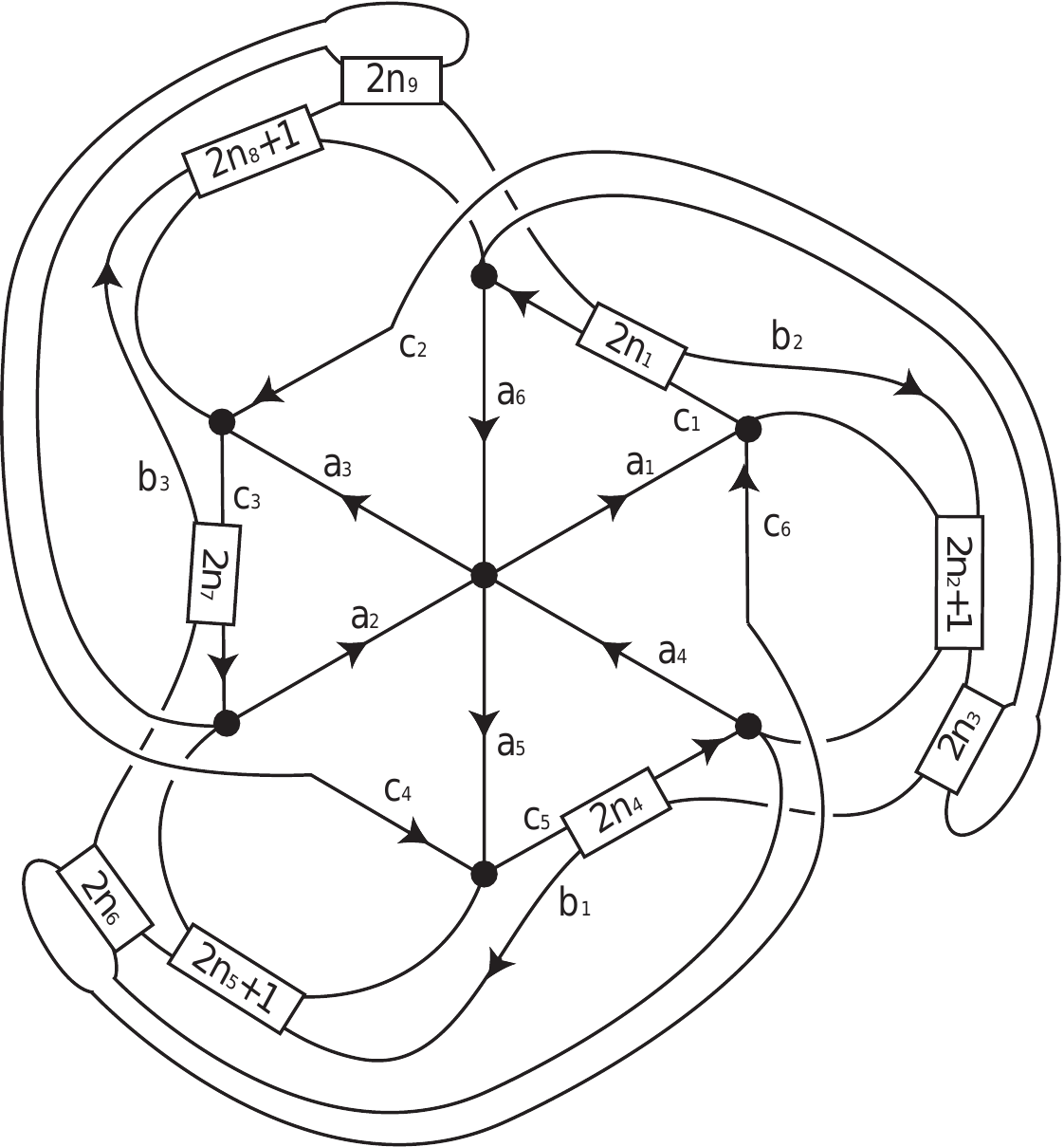}
\caption{An embedding $h$ of $K_{3,3,1}$ where the integers in the boxes indicate the number of half twists between the two edges.  }\label{K331h}
\end{center}
\end{figure}
\begin{figure}[h]
\begin{center}
\includegraphics[scale=0.6]{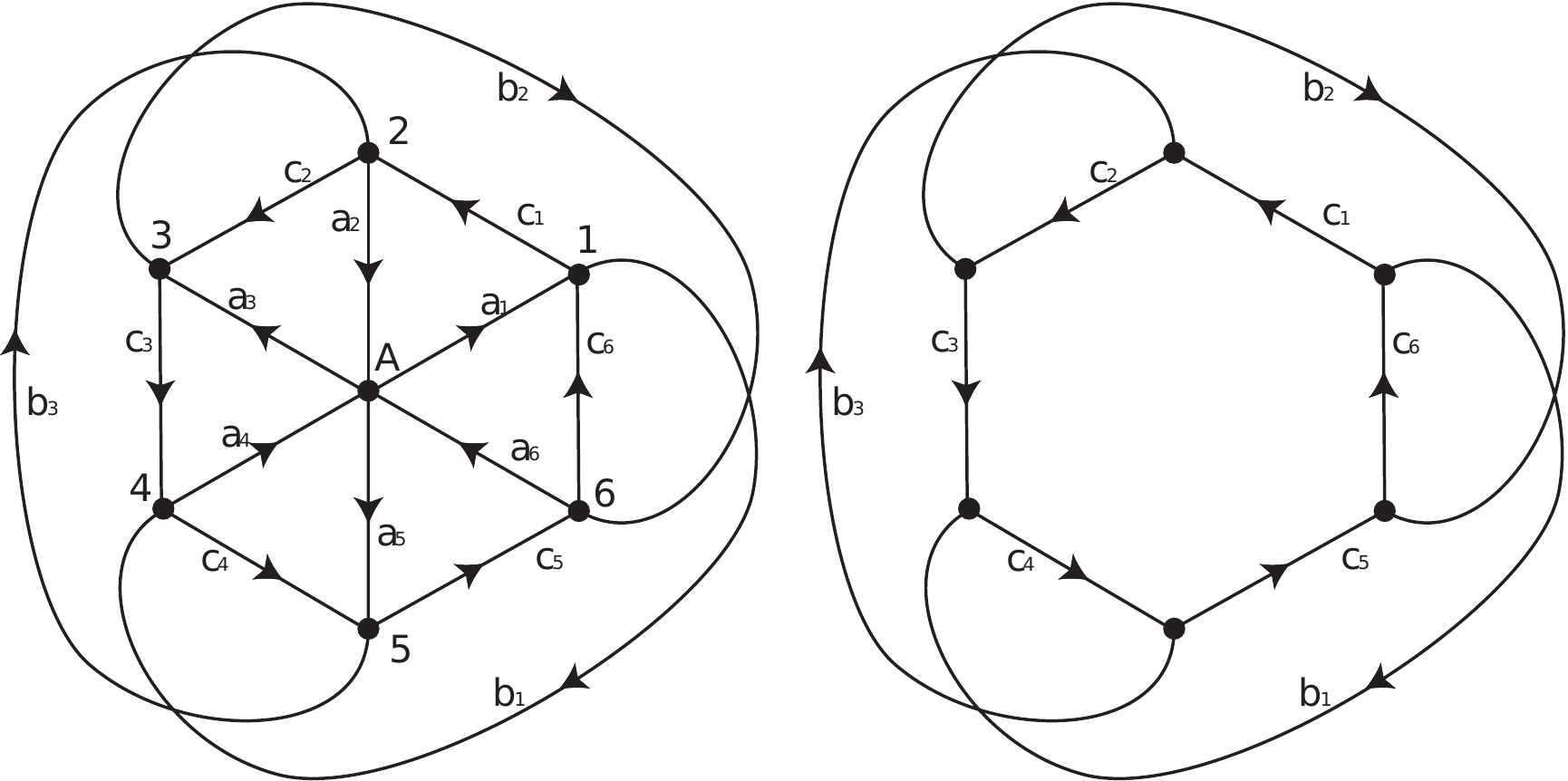}
\caption{On the left:  The graph $K_{3,3,1}$ with edges oriented and the edges and vertices labeled.  On the right:  The graph $K_{3,3}$ with edges oriented and labeled in the standard convention for the Wu invariant.  }\label{K33}
\end{center}
\end{figure}

Two embeddings $f,g$ of a graph $G$ are \emph{spatial graph-homologous} (or just \emph{homologous}) if there is a locally flat embedding $\Phi : (G\times I)\sharp \bigcup S_i\rightarrow\R^3\times I$ with $\Phi(G\times \{0\})\in\R^3\times \{0\}$ and  $\Phi(G\times \{1\})\in\R^3\times \{1\}$ where $S_i$ is a closed orientable surface and $S_i$ is attached on $Int(e\times I)$ for an edge $e\in E(G)$ by connected sum.   In \cite{T}, Taniyama showed the following:
\begin{thm}\label{emb} Two embeddings $f$ and $g$ of a simple graph $G$ into $\R^3$ are homologous if and only if $\mathcal{L}(f)=\mathcal{L}(g)$.  
\end{thm}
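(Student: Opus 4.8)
The plan is to reinterpret the Wu invariant as the primary obstruction to an equivariant homotopy, and then to match that homotopy-theoretic equivalence with the geometric homology relation. An embedding $f$ of $G$ determines a $\sigma$-equivariant map $\phi_f\colon P_2(G)\to S^2$, sending a point of $e_i\times e_j$ to the unit direction vector $(f(x)-f(y))/|f(x)-f(y)|$, where $\sigma$ acts antipodally on $S^2$; unwinding the definition of $\mathcal{L}(f)$ through the crossing signs $a_{ij}(f)$ shows that $\mathcal{L}(f)=\phi_f^{*}(\mu)$, where $\mu$ is the generator arising from the equivariant fundamental class of $S^2$. Because $P_2(G)$ is at most $2$-dimensional and $S^2$ is simply connected, the primary obstruction is the only obstruction, so $\mathcal{L}(f)=\mathcal{L}(g)$ in $L(G)$ if and only if $\phi_f$ and $\phi_g$ are $\sigma$-equivariantly homotopic. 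With this reformulation the theorem reduces to identifying equivariant homotopy of the maps $\phi$ with the graph-homology relation on embeddings.

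For the forward direction I would argue directly from the embedded cobordism. If $\Phi\colon (G\times I)\sharp\bigcup S_i\to\R^3\times I$ is a homology, then any two disjoint edges $e_i,e_j$ share no vertex, so $e_i\times I$ and $e_j\times I$ are disjoint in $G\times I$; since each surface $S_i$ is connect-summed on the interior of a single edge, the corresponding subsurfaces of $(G\times I)\sharp\bigcup S_i$ remain disjoint, and as $\Phi$ is an embedding their images stay disjoint throughout $\R^3\times I$. Tracking the direction vector between these two disjoint families over the parameter $I$ produces, on each cell $e_i\times e_j$, a $\sigma$-equivariant homotopy from $\phi_f$ to $\phi_g$; assembling these over all disjoint pairs yields $\phi_f\simeq_\sigma\phi_g$, hence $\mathcal{L}(f)=\mathcal{L}(g)$.

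The backward direction is the substantive one, and the plan is to realize an equivariant homotopy by a concrete singular cobordism and then repair its singularities. Placing $f$ and $g$ in generic position, take a generic PL homotopy $F\colon G\times I\to\R^3\times I$ from $f$ to $g$; since $G\times I$ is $2$-dimensional and $\R^3\times I$ is $4$-dimensional, $F$ is an immersion with finitely many transverse double points, each labelled by the pair of edges carrying it. A double point supported on a single edge is removed by tubing, which connect-sums a handle onto the interior of that edge and so supplies one of the surfaces $S_i$, changing nothing in $L(G)$; a double point on two edges sharing a vertex is removed by an admissible local modification near that vertex. A double point between two disjoint edges $e_i,e_j$ cannot be tubed away without joining distinct edges, which the definition forbids; its contribution is exactly the term $\pm E^{e_ie_j}$ distinguishing $\phi_f$ from $\phi_g$. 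The hypothesis $\mathcal{L}(f)=\mathcal{L}(g)$, equivalently $\phi_f\simeq_\sigma\phi_g$, says precisely that the signed count of disjoint-edge double points is trivial in $L(G)$, hence lies in $\mathrm{Im}\,\delta^1$. Realizing the needed elements of $\mathrm{Im}\,\delta^1$ by isotopies that slide an edge $e_i$ across a vertex $v_s$ (whose signed effect on crossings is exactly $\delta^1(V^{e_iv_s})$), one arranges the disjoint-edge double points to cancel in oppositely signed pairs by Whitney-type moves. After cancellation $F$ has only single-edge and adjacent-edge double points, all absorbed into the surfaces $S_i$ or into vertex-local modifications, yielding the locally flat embedding $\Phi$ and hence the homology $f\sim g$.

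The main obstacle is this cancellation of disjoint-edge double points in the low-dimensional ambient space $\R^3\times I$: the Whitney disks pairing oppositely signed double points are themselves $2$-dimensional in a $4$-manifold, so one must ensure their interiors can be made disjoint from the rest of the cobordism, or that residual intersections can again be tubed into admissible single-edge surfaces. One must also verify that the vertex-slide isotopies reducing the obstruction to zero on the nose do not reintroduce disjoint-edge crossings outside the controlled pairs, and separately confirm that the adjacent-edge double points can genuinely be cleared by local modifications near the shared vertex. Carrying out these interactions carefully, together with the bookkeeping check that the coboundary relations $\delta^1(V^{e_iv_s})$ match the vertex-slide isotopies exactly, is where the real work lies.
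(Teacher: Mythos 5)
The paper does not actually prove this statement: it is Taniyama's homology classification theorem, quoted verbatim from \cite{T}, so your attempt has to be measured against Taniyama's proof rather than anything in this paper. Your first step is correct and is indeed the classical Wu picture: $\mathcal{L}(f)$ is the primary obstruction class $\phi_f^{*}(\mu)$ of the equivariant Gauss map $\phi_f\colon P_2(G)\to S^2$, and since $\dim P_2(G)\leq 2$ the primary obstruction is complete, so $\mathcal{L}(f)=\mathcal{L}(g)$ if and only if $\phi_f\simeq_\sigma\phi_g$. The trouble begins with your forward direction. A homology $\Phi$ is not level-preserving, and the connect-summed surfaces $S_i$ destroy the product parametrization of $e\times I$, so ``tracking the direction vector over the parameter $I$'' does not define a homotopy of $S^2$-valued maps at all. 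The natural repair --- normalizing the difference $\Phi(x,t)-\Phi(y,t)$ in $\R^4$ --- yields an equivariant homotopy of maps into $S^3$, and that is vacuous: $\pi_2(S^3)=0$, so any two equivariant maps of a free $2$-complex into $S^3$ are equivariantly homotopic, and no equality of Wu invariants can be extracted this way. A genuine proof of invariance must exploit the embeddedness of the disjoint surface families through an intersection/duality argument (as in the proof that linking number is a cobordism invariant of two-component links), and must additionally address whether such evaluations on disjoint cycle pairs determine the full class in $L(G)$, which is presented by generators and relations and is not obviously torsion-free.

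The backward direction contains the more serious gap, and you name it yourself: the Whitney-type cancellation of oppositely signed disjoint-edge double points of the generic track $F$ in $\R^3\times I$ is precisely the hard content of Taniyama's theorem, and your sketch does not carry it out --- the Whitney disks are $2$-dimensional in a $4$-manifold, and their interior intersections with the rest of the track must themselves be traded for admissible single-edge tubes, which is a substantial argument, not a remark. Two further steps are asserted without proof: that a double point between adjacent edges ``is removed by an admissible local modification near that vertex'' (tubing is forbidden there, since each $S_i$ may be summed only into the interior of a single edge, so this requires a genuine lemma), and that the vertex-slide isotopies realizing elements of $\mathrm{Im}\,\delta^1$ can be performed without re-introducing disjoint-edge crossings outside the controlled pairs. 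As it stands, your proposal is a well-informed road map in the spirit of Taniyama's actual argument, but with the forward direction broken as written and the pivotal cancellation step missing, it is an outline rather than a proof.
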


\begin{prop}\label{wu331}
For every embedding of $K_{3,3,1}$ there are nine integers $n_1,\dots, n_9$ such that $f$ is spatial graph-homologous to the embedding $h$ of  $K_{3,3,1}$ shown in Figure \ref{K331h}.
\end{prop}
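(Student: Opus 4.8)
The plan is to use Theorem \ref{emb} to reduce the claim to a statement about the Wu invariant. By that theorem, two embeddings of $K_{3,3,1}$ are homologous precisely when they have the same Wu invariant in $L(K_{3,3,1})$. So it suffices to show that every element of $L(K_{3,3,1})$ is realized by one of the embeddings $h$ from the family in Figure \ref{K331h} as the twist parameters $n_1,\dots,n_9$ range over $\Z^9$. In other words, I would compute the group $L(K_{3,3,1})$ explicitly using the presentation described above, and then exhibit the twisting embeddings $h$ as a surjection $\Z^9 \to L(K_{3,3,1})$ via the Wu invariant.

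**First I would** set up the presentation of $L(K_{3,3,1})$ from the generators $\{E^{e_ie_j}\}$ over all pairs of disjoint edges and the relations $\delta^1(V^{e_iv_s})$, using the edge/vertex labeling and orientations fixed in Figure \ref{K33}. Concretely, $L(K_{3,3,1})$ is the cokernel of the coboundary map $\delta^1 : C^1(P_2(K_{3,3,1}),\sigma) \to C^2(P_2(K_{3,3,1}),\sigma)$, so I would assemble the corresponding integer matrix and reduce it (e.g.\ by Smith normal form) to identify $L(K_{3,3,1})$ and a convenient set of free generators, say $\mathcal{L}(h)$ expressed in terms of the crossing-count coefficients $a_{ij}$. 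The vertex $A$ of valence $9$ together with the bipartite structure should make most coboundary relations easy to read off, and I expect $L(K_{3,3,1})$ to be free abelian of rank nine, matching the nine twist regions.

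**The key remaining step** is to verify that the nine independent half-twist regions in $h$ surject onto $L(K_{3,3,1})$. Each integer $n_k$ controls the number of signed crossings between a specified pair of disjoint edges, hence contributes a controlled term to $\sum a_{ij}(h)E^{e_ie_j}$; inserting a full (two half-) twist at region $k$ changes the relevant $a_{ij}$ by $\pm 1$ while leaving the others fixed, so the map $(n_1,\dots,n_9)\mapsto \mathcal{L}(h)$ is affine-linear in the $n_k$. I would confirm that the images of the coordinate directions generate $L(K_{3,3,1})$ modulo the relations, which reduces to a rank computation on the matrix relating twist parameters to generators of $L(K_{3,3,1})$. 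Given an arbitrary embedding $f$, its Wu invariant $\mathcal{L}(f)$ is some element of $L(K_{3,3,1})$; solving the (surjective) linear system then produces integers $n_1,\dots,n_9$ with $\mathcal{L}(h)=\mathcal{L}(f)$, whence $f$ is homologous to $h$ by Theorem \ref{emb}.

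**The main obstacle I anticipate** is the bookkeeping in choosing the twist regions of $h$ so that their effect on the Wu invariant is both independent and spanning: I must ensure the nine regions are positioned on nine pairs of disjoint edges whose dual generators $E^{e_ie_j}$ descend to a generating set of $L(K_{3,3,1})$, rather than getting trapped in the span of the coboundary relations. This requires care precisely because many pairs of edges are \emph{not} disjoint in $K_{3,3,1}$ (every edge meets the high-valence vertex $A$ or a shared part-class vertex), so the generator set is already constrained, and I would lean on the explicit matrix computation to certify surjectivity rather than argue it by hand.
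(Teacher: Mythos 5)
Your proposal takes essentially the same route as the paper: it invokes Theorem \ref{emb} to reduce the claim to showing that the nine dual generators $E^{e_ie_j}$ corresponding to the twist regions of $h$ generate $L(K_{3,3,1})$, exactly the reduction the paper makes, with the only difference being that the paper certifies generation by hand-computing the coboundary relations $\delta^1(V^{b_1*})$, $\delta^1(V^{a_1*})$, $\delta^1(V^{c_1*})$ (using Nikkuni's rank formula only as a sanity check that $rk(L(K_{3,3,1}))=9$) rather than by a Smith normal form computation. One parenthetical slip worth fixing: a \emph{half} twist between two disjoint edges changes $a_{ij}$ by $\pm 1$ while a full twist changes it by $\pm 2$ --- which is precisely why the boxes in Figure \ref{K331h} count half twists --- but this does not affect the structure or correctness of your argument.
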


\begin{proof} We will use the edge and vertex labeling, as well as edge orientation indicated in Figure \ref{K33}.  The order on the sets is as given $E(K_{3,3,1})=\{a_1, \dots, a_6, b_1, b_2, \\b_3, c_1, \dots , c_6\}$ and $V(K_{3,3,1})=\{1,2,3,4,5,6,A\}.$  By Theorem \ref{emb} we need only show that $\mathrm{L(G)}$ can be generated by the set of elements $S=\{E^{b_1b_2}, E^{b_1b_3}, E^{b_2b_3}, \\E^{b_1c_2}, E^{b_1c_5}, E^{b_2c_1}, E^{b_2c_4}, E^{b_3c_3}, E^{b_3c_6}\}$.  Note that, in \cite{RN} Nikkuni shows for a 3-connected graph $G$ that $$rk( \mathrm{L(G)})=\frac{1}{2}\Big(\beta_1^2+\beta_1+4|\mathrm{E(G)}|-\sum_{v\in\mathrm{V(G)}} (deg(v))^2\Big),$$ where $\beta_1$ is the first Betti number of $G$.  So it is expected that $rk (\mathrm{L}(K_{3,3,1}))=9.$  

Now, if we consider the coboundary for elements $V^{b_1*}$ we find

\[
\begin{array}{ccc}
\delta^1(V^{b_12}) & = & E^{b_1c_2}+ E^{a_2b_1}- E^{b_1b_3}\\
\delta^1(V^{b_13}) & = & E^{b_1b_2}-E^{a_3b_1}- E^{b_1c_2} \\
\delta^1(V^{b_15}) & = & E^{b_1c_5}+ E^{b_1b_3}-E^{a_5b_1} \\
\delta^1(V^{b_16}) & = & E^{a_6b_1}-E^{b_1b_2}- E^{b_1c_5}. 
\end{array}
\]
So the elements $E^{a_ib_1}$ (for $i$ such that $b_1\cap a_i=\emptyset$) can all be expressed as linear combinations elements of $S$.  This is consistent with the additional relation given by $\delta^1(V^{b_1A}).$  Similarly, all those elements of the form $E^{a_ib_2},$ and  $E^{a_ib_3}$ (for appropriate $a_i$) can be expressed as linear combinations elements of $S$.  Next, if we consider the coboundary for elements $V^{a_1*}$ we find

\[
\begin{array}{ccl}
\delta^1(V^{a_12}) & = & E^{a_1c_2}- E^{a_1b_3} \\
\delta^1(V^{a_14}) & = & E^{a_1c_4}- E^{a_1c_3} \\
\delta^1(V^{a_16}) & = & -E^{a_1c_5}- E^{a_1b_2} \\
\delta^1(V^{a_13}) & = & E^{a_1b_2}+ E^{a_1c_3} E^{a_1c_2} \\
\delta^1(V^{a_15}) & = & E^{a_1c_5}+ E^{a_1b_3}- E^{a_1c_4}.
\end{array}
\]
Thus, all of the elements of the from $E^{a_1c_i}$ (for $i$ such that $a_1\cap c_i=\emptyset$) can be expressed as a linear combination of $E^{a_1b_2}$ and $E^{a_1b_3},$ which can in turn be expressed as a linear combination of the elements in $S$.  Similarly, those elements of the form  $E^{a_jc_i}$ can be expressed as a linear combination of $E^{a_lb_k}$ for those $l$ and $k$ such that $a_l\cap b_k=\emptyset$.  Finally, if we consider the coboundary for elements $V^{c_1*}$ we find

\[
\begin{array}{ccc}
\delta^1(V^{c_13}) & = & E^{c_1c_3}+ E^{b_2c_1}- E^{a_3c_1} \\
\delta^1(V^{c_14}) & = & E^{a_4c_1}+ E^{c_1c_4}- E^{c_1c_3} \\
\delta^1(V^{c_15}) & = & E^{c_1c_5}- E^{c_1c_4}- E^{a_5c_1} \\
\delta^1(V^{c_16}) & = & E^{a_6c_1}- E^{c_1c_5}- E^{b_2c_1}. 
\end{array}
\]
So the elements $E^{c_1c_i}$ (for $i$ such that $c_1\cap c_i=\emptyset$) can be written as a linear combination of $E^{c_1b_2}$ and $E^{a_jc_1}$ (for $j$ such that $a_j\cap c_1=\emptyset$), which can be written as linear combinations of those elements in $S$.  Similarly, all the remaining elements, $E^{c_ic_j},$ can be written as linear combinations of the elements in $S$.  Thus completing our proof.  
\end{proof}

We will make use of two relations that are known for the Wu invariant of $K_{3,3}$.  The Wu invariant
of $f(K_{3,3})$ can be expressed in this simple combinatorial form \cite{T}:
$$\mathcal{L}(f)=\sum_{(x,y)}\varepsilon(x,y)l(f(x),f(y)),$$
the sum over all unordered disjoint pairs of edges in $G,$ where $l(f(x),f(y))$ is the sum of the signs of the crossing between $f(x)$ and $f(y)$, and $\varepsilon(x,y)$ is a weighting defined, 

$$
\varepsilon(x,y) = 
\begin{cases} 
-1,  & \mbox{for }(c_i,b_l) \mbox{ if }i\mbox{ is odd}\\
1, & \mbox{else} 
\end{cases}
$$
where the edges of $K_{3,3}$ are labeled as indicated in Figure \ref{K33}.  
There is another invariant known as the $\alpha-invariant$ of $f,$ \cite{Ta}, for a spatial embedding of $K_{3,3}$ it is as follows:
$$\alpha (f)=\sum_{\gamma\in\Gamma_H}a_2(f(\gamma))-\sum_{\gamma\in\Gamma_4}a_2(f(\gamma)).$$
There is the following relationship between these two invariants:
\begin{prop} \label{alpha-wu}\textup{\cite{MT}} Let $f$ be a spatial embedding of $K_{3,3}$ then, $$\alpha(f)=\frac{\mathcal{L}(f)^2-1}{8}.$$
\end{prop}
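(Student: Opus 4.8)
The plan is to exhibit both $\alpha(f)$ and a suitable function of $\mathcal{L}(f)$ as spatial graph-homology invariants, and then pin the function down on explicit representatives. First I would record the structural facts. Nikkuni's rank formula gives $rk(L(K_{3,3}))=1$ (here $\beta_1=4$, $|E|=9$, and $\sum_v(\deg v)^2=54$, so that the bracket is $20+36-54=2$), whence $L(K_{3,3})\cong\Z$ and $\mathcal{L}(f)$ may be identified with an integer; this integer is always odd, since a standard minimal-crossing diagram has $\mathcal{L}=\pm1$ and every crossing change between disjoint edges alters $\mathcal{L}$ by an even amount. By Theorem \ref{emb}, $\mathcal{L}$ is a complete invariant of spatial graph-homology, so the homology classes of embeddings of $K_{3,3}$ are in bijection with the odd integers. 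Consequently, once $\alpha$ is known to be a homology invariant as well, it must be of the form $\alpha(f)=F(\mathcal{L}(f))$ for a single function $F$ of one odd-integer variable, and the proposition reduces to computing $F$.

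The central step is therefore to prove that $\alpha$ is unchanged under the moves generating homology within a fixed value of $\mathcal{L}$, namely crossing changes at a self-crossing of one edge and crossing changes between two edges sharing a vertex; these are exactly the crossing changes that leave every $l(f(x),f(y))$ with $x,y$ disjoint unaltered, and hence fix $\mathcal{L}$. For this I would use the Conway skein relation in the form $a_2(L_+)-a_2(L_-)=lk(L_0)$: a crossing change at a point $p$ alters $a_2(f(\gamma))$ only for those cycles $\gamma$ in which $p$ is a self-crossing, and then by precisely the linking number of the two loops obtained by smoothing $\gamma$ at $p$. I would then show that, summed with the weights $+1$ on $\Gamma_H$ and $-1$ on $\Gamma_4$ appearing in the definition of $\alpha$, these linking-number contributions cancel. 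The combinatorial heart is a counting argument on the cycles of $K_{3,3}$ through a fixed self-crossing or adjacent pair: the affected Hamiltonian cycles and $4$-cycles pair up so that their smoothing-linking contributions occur with opposite net sign.

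With $\alpha=F(\mathcal{L})$ in hand, I would determine $F$ on a one-parameter family of representatives. Starting from a standard diagram in which all cycles are unknotted and unsplit, one has $\alpha=0$ and $\mathcal{L}=\pm1$, so $F(\pm1)=0$. Inserting $k$ clasps into a band between one fixed pair of disjoint edges $e_i,e_j$ changes $\mathcal{L}$ by $\pm2$ per clasp and realizes $\mathcal{L}=2k+1$; evaluating $\alpha$ on this family by the skein formula amounts to counting the Hamiltonian cycles and $4$-cycles through both $e_i$ and $e_j$ and the knots they acquire, and the total comes to $\binom{k+1}{2}=\tfrac{k(k+1)}{2}$, which is exactly $\tfrac{(2k+1)^2-1}{8}$. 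Since $F$ is then known on every odd integer, $\alpha(f)=\tfrac{\mathcal{L}(f)^2-1}{8}$ for all $f$.

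The main obstacle I anticipate is the cancellation of the second paragraph, equivalently the bare assertion that $\alpha$ is a homology invariant. This is not automatic, because spatial graph-homology permits attaching closed orientable surfaces to edges and hence can change the Conway polynomials of individual cycles; only the specific weighted combination defining $\alpha$ survives. An alternative that avoids proving homology invariance in isolation is to compute, under a single crossing change between disjoint edges $e_i,e_j$, both $\Delta\mathcal{L}=2\epsilon\,\varepsilon(e_i,e_j)$ (where $\epsilon=\pm1$ records the direction of the change and $\varepsilon$ is the weighting of the combinatorial form of $\mathcal{L}$) and $\Delta\alpha$, and to verify the identity $\Delta\alpha=\tfrac12(\epsilon\,\varepsilon(e_i,e_j)\,\mathcal{L}+1)$ forced by the target formula; this is integral since $\mathcal{L}$ is odd, and it reduces the whole proposition to re-expressing the relevant smoothing-linking numbers in terms of $\mathcal{L}$, which is where the genuine work sits.
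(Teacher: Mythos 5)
The paper itself gives no proof of this proposition --- it is quoted from \cite{MT} --- so your attempt can only be measured against the cited literature and the paper's general technique. Your primary route (normal form, homology invariance, evaluation on a one-parameter clasp family) is the same style of argument the paper uses for Lemma \ref{sum}: reduce to a standard homologous representative and compute both sides there. The skeleton is sound: the rank computation is correct, $\mathcal{L}$ is indeed odd-valued on embeddings of $K_{3,3}$, the base case $F(\pm 1)=0$ is right, and $\Delta\alpha=\frac{1}{2}\bigl(\epsilon\,\varepsilon(e_i,e_j)\,\mathcal{L}+1\bigr)$ is the correct forced identity. But the load-bearing step is missing. Your second paragraph rests on the assertion that self-crossing changes and adjacent-edge crossing changes \emph{generate} homology within a fixed value of $\mathcal{L}$. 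Theorem \ref{emb} says only that equal Wu invariant is equivalent to homologous; it does not say that two embeddings with equal $\mathcal{L}$ are joined by a sequence consisting solely of such moves. A generic sequence of crossing changes joining them contains disjoint-edge changes whose contributions to $\mathcal{L}$ cancel only globally, and trading a cancelling pair for self/adjacent changes is a delta-move-type generation theorem (in the style of Murakami--Nakanishi; this is exactly the machinery of \cite{MT}, whose titular move is the delta move) --- a result of essentially the same depth as the one you are proving. Without it you have not shown $\alpha$ is constant on homology classes. The clean repair is to cite Taniyama \cite{KT}, where the homology invariance of the $\alpha$-invariant is established; the paper lists this reference.

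Your alternative route does avoid the generation problem, since any two embeddings of $K_{3,3}$ are related by isotopy and arbitrary crossing changes, but then the entire content of the proposition sits in the two skein identities you only assert: (i) $\Delta\alpha=0$ under a self- or adjacent-edge crossing change, and (ii) $\Delta\alpha=\frac{1}{2}\bigl(\epsilon\,\varepsilon(e_i,e_j)\,\mathcal{L}+1\bigr)$ under a disjoint-edge change. For (ii), a disjoint pair $e_i\cup e_j$ lies on exactly three Hamiltonian cycles and one $4$-cycle, so the claim is that a signed sum of four smoothing linking numbers equals an affine function of the \emph{global} invariant $\mathcal{L}$; for (i), the weighted counts do vanish ($4-4$ through a single edge, $2-2$ through an adjacent pair), but the individual smoothings are different two-component links, so the claimed "pairing with opposite net sign" needs a genuine geometric argument, not a count. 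These are Conway--Gordon/Nikkuni-type identities and constitute the real proof; the same applies to the asserted value $\binom{k+1}{2}$ on the clasp family, which is stated because the target formula forces it rather than derived. One smaller nick: the rank formula gives $rk(L(K_{3,3}))=1$ but does not by itself exclude torsion, so identifying $\mathcal{L}(f)$ with an odd integer and invoking completeness requires the computation $L(K_{3,3})\cong\Z$ (as in \cite{RN}) or the combinatorial formula from \cite{T} that the paper records. As it stands, then, your proposal is a plausible roadmap with correct consistency checks, but its two pivotal lemmas are unproven and one of your two proposed routes to them rests on an unjustified generation statement.
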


The following lemma is about the relationship between the sum of the square of the linking number of all of the links in $K_{3,3,1}$ and the sums of the squares of the Wu invariant of $K_{3,3}$ subgraphs and $K_{3,3}$ subdivisions.  Let the valence 9 vertex of $K_{3,3,1}$ be labeled $A$.  Let $G_i$ for $i=1,\dots, 18$ be the subdivisions of $K_{3,3}$ obtained by deleting three of the edges adjacent to $A$ and then deleting the two edges not adjacent to those already deleted edges, see Figure \ref{Gi}.  Let $H_i$ for $i=1,\dots, 6$ be the $K_{3,3}$ subgraphs that are obtained by deleting one vertex $v\neq A$ and deleting two additional edges that are adjacent to $A$, see Figure \ref{Hi}.  Let $K$ be the $K_{3,3}$ subgraph obtained by deleting the vertex $A$.  

\begin{lem}\label{sum}
 For any embedding $f$ of $K_{3,3,1}$ into $\R^3$ or $S^3$ the following holds 
$$\sum_{\gamma\in\Gamma_{3,4} (K_{3,3,1})} lk(f(\lambda))^2=\frac{1}{8}\sum_{G_i} \mathcal{L}(f|_{G_i})^2-\frac{1}{2}\sum_{K} \mathcal{L}(f|_K)^2-\frac{1}{8}\sum_{H_i} \mathcal{L}(f|_{H_i})^2,$$ where $G_i, K, H_i$ are the above described subgraphs.  
\end{lem}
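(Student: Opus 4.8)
The plan is to reduce both sides of the identity to explicit expressions in the signed crossing numbers of a single diagram and then compare them. Write $l(f(x),f(y))$ for the sum of signs of the crossings between the edges $f(x)$ and $f(y)$, exactly as in the combinatorial formula for the Wu invariant of $K_{3,3}$ recalled above. Each linking number on the left is then $lk(f(\lambda))=\tfrac12\sum_{x\in\gamma,\,y\in\gamma'}l(f(x),f(y))$ for $\lambda=(\gamma,\gamma')$, and each Wu invariant on the right is a signed sum $\sum\varepsilon(x,y)\,l(f(x),f(y))$ over disjoint edge pairs of the relevant subgraph. Both sides are therefore determined by these crossing numbers; moreover, since the linking number of a pair of disjoint cycles and each restricted Wu invariant are all determined by $\mathcal{L}(f)$, both sides depend only on the spatial graph-homology class of $f$. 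Hence, by Theorem \ref{emb} and Proposition \ref{wu331}, it suffices to verify the identity for the standard embedding $h$ of Figure \ref{K331h}, where every crossing number is read off from the half-twist integers $n_1,\dots,n_9$.

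First I would pin down the objects being summed. Since $K_{3,3}$ is bipartite, every $3$-cycle of $K_{3,3,1}$ runs through $A$ and so consists of $A$ together with one vertex from each side of $K$; the complementary $4$-cycle is the unique $4$-cycle on the four remaining vertices. This gives exactly nine links in $\Gamma_{3,4}$, each using all seven vertices. For $h$ I would record each of these nine linking numbers as an affine-linear function of $n_1,\dots,n_9$, so that squaring and summing produces the left-hand quadratic form. On the right, for the honest $K_{3,3}$ subgraphs $K$ and the $H_i$ (Figure \ref{Hi}) I apply the combinatorial formula directly, using the $\varepsilon$-weights attached to the labeling of Figure \ref{K33}; for the subdivisions $G_i$ (Figure \ref{Gi}) I use that $\mathcal{L}$ is unchanged under subdivision, treating the path through $A$ as a single $K_{3,3}$ edge whose crossing number with another edge is the sum of the crossing numbers of its two segments. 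Each restricted Wu invariant is again affine-linear in the $n_k$, carrying the $\pm1$ that every $K_{3,3}$ contributes, so the three weighted sums give a quadratic form plus a constant.

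As a first consistency check the constant terms match. At $n_1=\dots=n_9=0$ the embedding $h$ carries a single Hopf link, every restricted Wu invariant equals $\pm1$, and $\tfrac18\cdot 18-\tfrac12\cdot 1-\tfrac18\cdot 6=1$, which is precisely the one nonzero value of $lk(f(\lambda))^2$. It then remains to match the quadratic parts of the two forms in $n_1,\dots,n_9$, which is a finite computation once the linear dependence of each $lk(f(\lambda))$ and each $\mathcal{L}(f|_{G_i})$, $\mathcal{L}(f|_{H_i})$, $\mathcal{L}(f|_{K})$ on the $n_k$ is tabulated.

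The hard part will be the bookkeeping rather than any single idea. The $\varepsilon$-signs must be assigned correctly in each of the $18+6+1$ subgraphs, since each inherits its own labeling and the parity condition ``$i$ odd'' in $\varepsilon(c_i,b_l)$ must be tracked per subgraph; the subdivided edge through $A$ in the $G_i$ must be handled consistently; and, crucially, one must check that the knotting-type contributions (products of crossings carried by a single cycle) cancel among the three sums while the genuine linking-type cross terms assemble, with the right multiplicities, into $\sum lk(f(\lambda))^2$. I expect this cancellation of the non-linking terms, controlled precisely by the coefficients $\tfrac18$, $-\tfrac12$, $-\tfrac18$, to be the crux of the argument; reducing to $h$ via Proposition \ref{wu331} is what keeps it to a finite verification in the nine parameters.
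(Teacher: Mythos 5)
Your proposal reproduces the paper's proof essentially verbatim: reduce to the standard embedding $h$ via Proposition \ref{wu331} and the homology invariance of linking numbers and restricted Wu invariants, express the nine linking numbers and each of $\mathcal{L}(h|_{G_i})$, $\mathcal{L}(h|_{H_i})$, $\mathcal{L}(h|_{K})$ as affine-linear forms in $n_1,\dots,n_9$ (handling the subdivided $G_i$ exactly as you describe), and check the resulting polynomial identity. One small caveat: your $n=0$ sanity check misreads the embedding $h$ of Figure \ref{K331h} --- by the paper's computation it contains \emph{three} Hopf links there (linking numbers $n_2+n_3+n_4+n_5+1$, etc.), with $\mathcal{L}(h|_K)=3$ and six of the $\mathcal{L}(h|_{G_i})$ equal to $3$, so both sides equal $3$ rather than $1$ at $n=0$; the identity still balances, and your method is unaffected.
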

\begin{figure}[h]
\begin{center}
\includegraphics[scale=0.43]{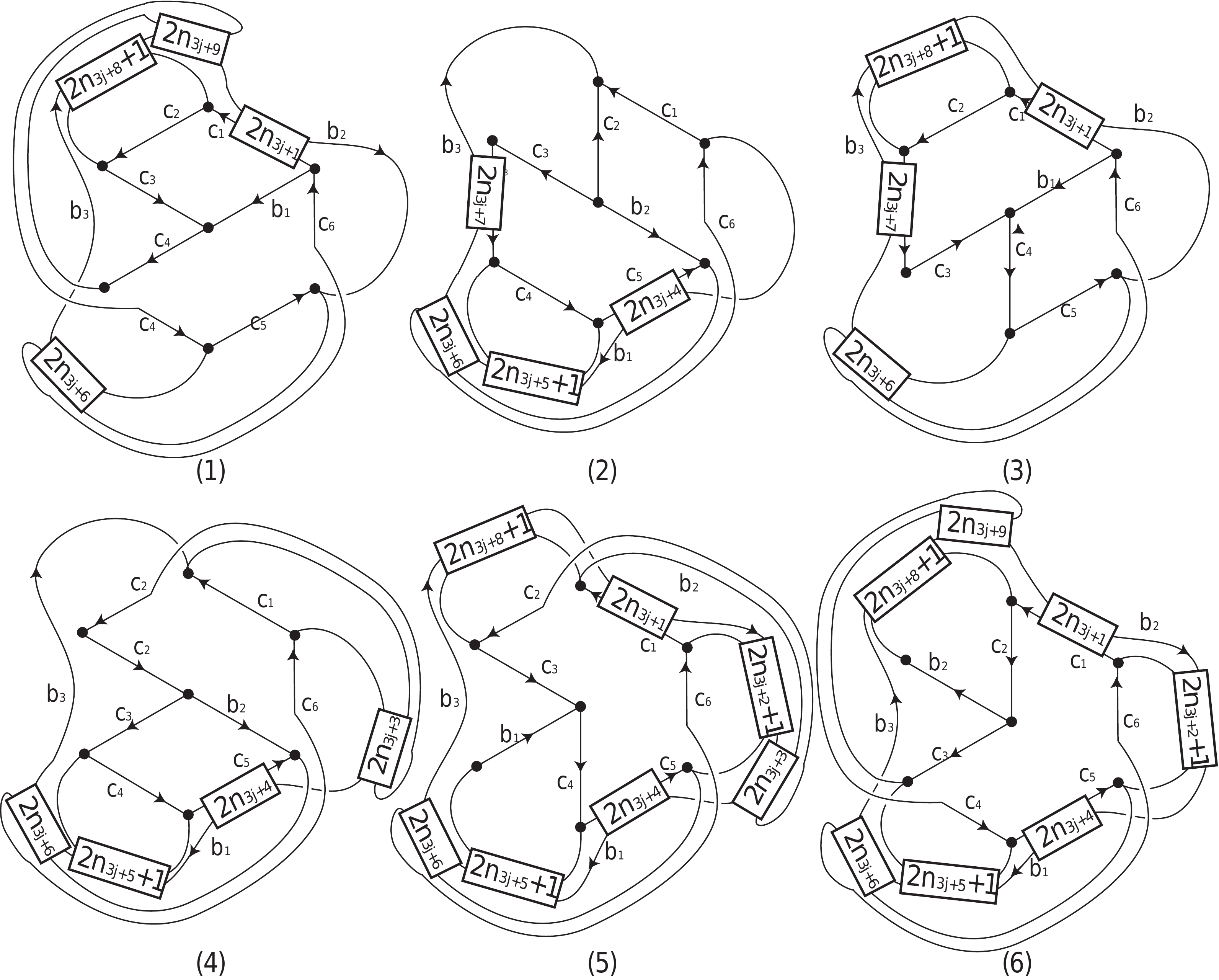}
\caption{The $G_i$ subgraphs of $h(K_{3,3,1}).$}\label{Gi}
\end{center}
\end{figure}
\begin{figure}[h]
\begin{center}
\includegraphics[scale=0.5]{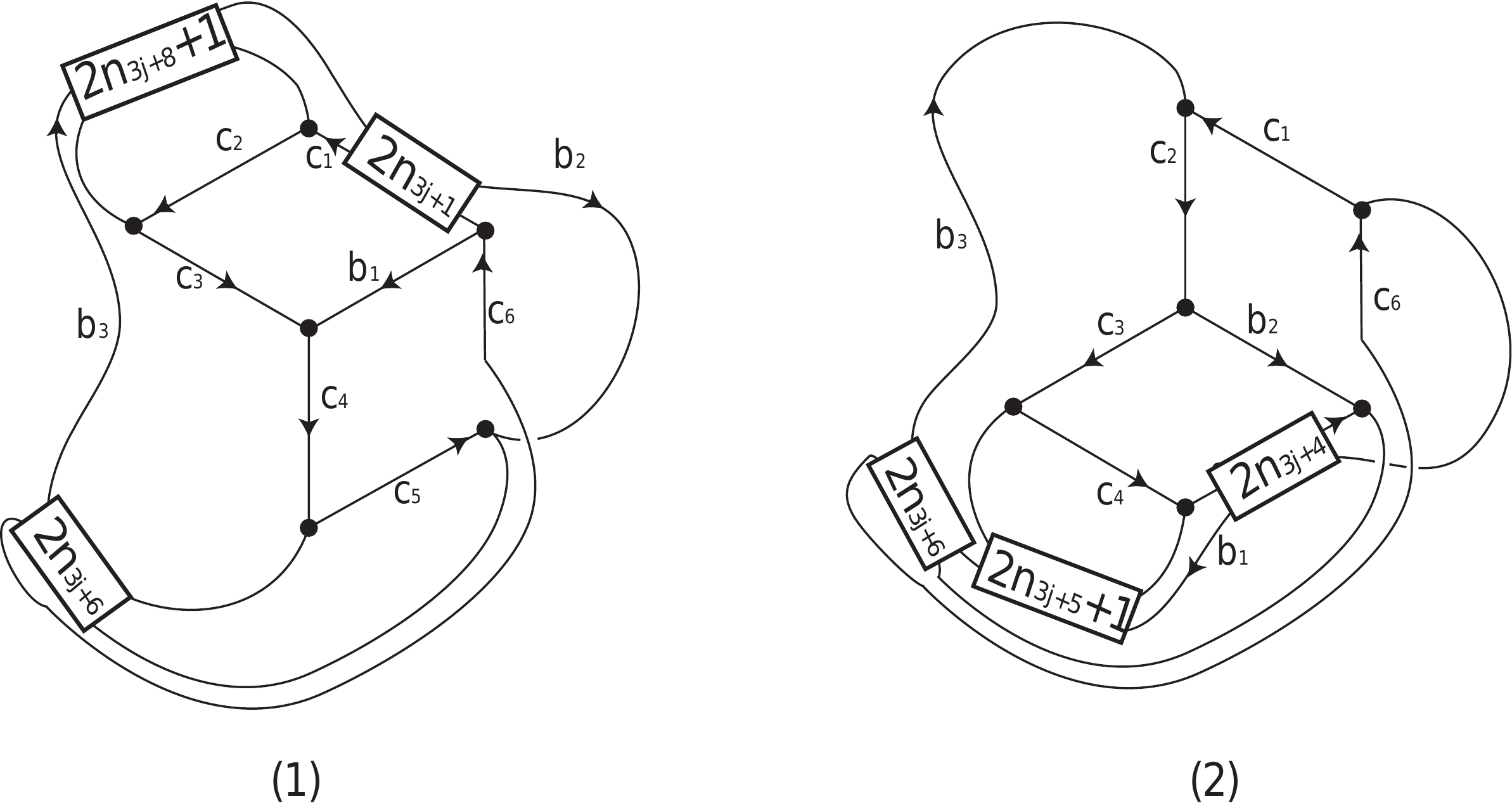}
\caption{The $H_i$ subgraphs of $h(K_{3,3,1}).$}\label{Hi}
\end{center}
\end{figure}

\begin{proof}  From Proposition \ref{wu331} we know there are nine integers $n_1, n_2,\dots, n_9$ such that $f$ is spatial graph-homologous to the embedding $h$ of  $K_{3,3,1}.$  If two embeddings are spatial graph-homologous then they are also spatial graph-homologous when restricted to subgraphs.  Both linking number and the Wu invariant are spatial graph-homology invariants.  Thus we need only show:
$$\sum_{\Gamma_{3,4} (K_{3,3,1})} lk(h(\lambda))^2=\frac{1}{8}\sum_{G_i} \mathcal{L}(h|_{G_i})^2-\frac{1}{2}\sum_{K} \mathcal{L}(h|_K)^2-\frac{1}{8}\sum_{H_i} \mathcal{L}(h|_{H_i})^2.$$
Let $h(G_1),\dots, h(G_{18})$ be as indicated in Figure \ref{Gi}, the subscripts of the $n$'s should be taken modulo 9, $h(G_i)$ for $i=1,2,3$ is as in Figure \ref{Gi}(1) with $j=i \mod 3$, $h(G_i)$ for $i=4,5,6$ is as in Figure \ref{Gi}(2) with $j=i \mod 3$, $h(G_i)$ for $i=7,8,9$ is as in Figure \ref{Gi}(3) with $j=i \mod 3$, $h(G_i)$ for $i=10,11,12$ is as in Figure \ref{Gi}(4) with $j=i \mod 3$, $h(G_i)$ for $i=13,14,15$ is as in Figure \ref{Gi}(5) with $j=i \mod 3$,  and $h(G_i)$ for $i=16, 17,18$ is as in Figure \ref{Gi}(6) with $j=i \mod 3.$  Let $h(H_1),\dots, h(H_{6})$ be as indicated in Figure \ref{Hi}, the subscripts of the $n$s should be taken modulo 9, $h(H_i)$ for $i=1,2,3$ is as in Figure \ref{Hi}(1) with $j=i \mod 3$, $h(H_i)$ for $i=4,5,6$ is as in Figure \ref{Hi}(2) with $j=i \mod 3$.  So the Wu invariants are as follows, where all subscripts are taken modulo 9:
$$
\begin{array}{rlc}
  \mathcal{L}(h|_{G_i})&=2(n_{3i+1}+n_{3i+6}+n_{3i+8}+n_{3i+9})+1  &\text{for }i=1,2,3    \\
   \mathcal{L}(h|_{G_i})&=2(n_{3i+4}+n_{3i+5}+n_{3i+6}+n_{3i+7})+1  &\text{for }i=4,5,6    \\
 \mathcal{L}(h|_{G_i})&=2(n_{3i+1}+n_{3i+6}+n_{3i+7}+n_{3i+8})+1  &\text{for }i=7,8,9    \\
    \mathcal{L}(h|_{G_i})&=2(n_{3i+3}+n_{3i+4}+n_{3i+5}+n_{3i+6})+1  &\text{for }i=10,11,12   \\
    \mathcal{L}(h|_{G_i})&=2(n_{3i+1}+n_{3i+2}+n_{3i+3}+n_{3i+4}+n_{3i+5}+n_{3i+6}+n_{3i+8})+3  &\text{for }i=13,14,15  \\
 \mathcal{L}(h|_{G_i})&=2(n_{3i+1}+n_{3i+2}+n_{3i+4}+n_{3i+5}+n_{3i+6}+n_{3i+8}+n_{3i+9})+3  &\text{for }i=16,17,18  \\
  \mathcal{L}(h|_{H_i})&=2(n_{3i+1}+n_{3i+6}+n_{3i+8})+1  &\text{for }i=1,2,3    \\
   \mathcal{L}(h|_{H_i})&=2(n_{3i+4}+n_{3i+5}+n_{3i+6})+1  &\text{for }i=4,5,6    \\
    \mathcal{L}(h|_{K})&=2(n_{1}+n_{2}+n_{3}+n_4+n_{5}+n_{6}+n_7+n_{8}+n_{9})+3  &
\end{array}
$$

The links in the embedding $h(K_{3,3,1})$ are in two forms.  There are six of the form shown in Figure \ref{K331links}(1), one for each $i=1,3,4,6,7,9$ and three of the form shown in Figure \ref{K331links}(2), one for each $j=1,2,3,$ again all of the subscripts are taken modulo 9.    Thus, 
$$
\begin{array}{cl}
  \sum_{\Gamma_{3,4} (K_{3,3,1})} lk(f(\lambda))^2=& n_1^2+n_3^2+n_4^2+n_6^2+n_7^2+n_9^2  +(n_2+n_3+n_4+n_5+1)^2 \\
  &+(n_5+n_6+n_7+n_8+1)^2+(n_8+n_9+n_1+n_2+1)^2  
  \end{array}
$$
Together these computations give the desired result.  
 
\begin{figure}[h]
\begin{center}
\includegraphics[scale=0.5]{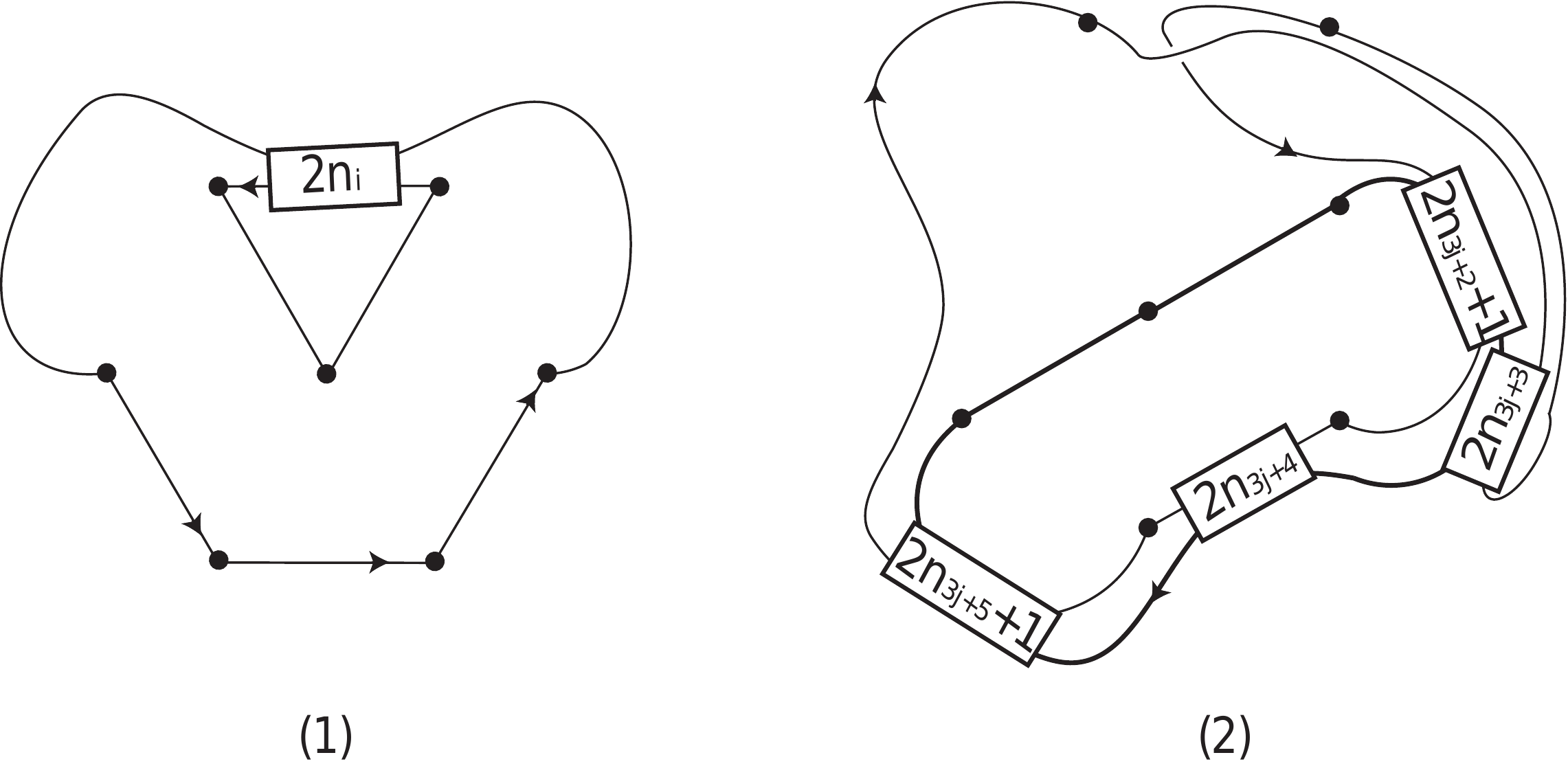}
\caption{The two different types of links found in the embedding $h(K_{3,3,1}).$}\label{K331links}
\end{center}
\end{figure}
\end{proof}

Next we use the relationships between $\mathcal{L}(f)$ and $\alpha(f)$ to obtain the following relationship between the linking number and the second coefficient of the Conway polynomial.  

\begin{thm}\label{k331}  For every embedding $f$ of $K_{3,3,1}$ into $\R^3$ or $S^3$ the following holds
$$\sum_{\lambda\in\Gamma_{3,4} (K_{3,3,1})} lk(f(\lambda))^2=2\Big(\sum_{\gamma\in\Gamma_H}a_2(f(\gamma))-2\sum_{\substack{\gamma\in\Gamma_6\\A\notin\gamma}}a_2(f(\gamma))-\sum_{\substack{\gamma\in\Gamma_5\\A\in\gamma}}a_2(f(\gamma))\Big)+1.$$
\end{thm}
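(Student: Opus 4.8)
The plan is to combine Lemma~\ref{sum} with Proposition~\ref{alpha-wu}, converting the right-hand side of Lemma~\ref{sum}, which is expressed in terms of squares of Wu invariants of the $K_{3,3}$-type subgraphs $G_i$, $H_i$, and $K$, into a statement about the $\alpha$-invariants of these subgraphs, and finally into a statement about the $a_2$ of Conway polynomials of cycles in the ambient embedding $f(K_{3,3,1})$.

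\medskip

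First I would apply Proposition~\ref{alpha-wu} to each of the $K_{3,3}$ subgraphs appearing in Lemma~\ref{sum}. Since each $G_i$ is a \emph{subdivision} of $K_{3,3}$ and each $H_i$ and $K$ is a genuine $K_{3,3}$ subgraph, the $\alpha$-invariant and the relation $\alpha = (\mathcal{L}^2-1)/8$ apply to all of them (a subdivision carries the same spatial-graph data as $K_{3,3}$, since subdividing an edge does not change any knot or link type of the embedded cycles). Writing $\mathcal{L}(f|_{G_i})^2 = 8\alpha(f|_{G_i}) + 1$ and similarly for $H_i$ and $K$, I would substitute into the right-hand side of Lemma~\ref{sum}. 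The constant terms combine as $\frac{1}{8}(18) - \frac{1}{2}(1) - \frac{1}{8}(6) = \frac{18}{8} - \frac{4}{8} - \frac{6}{8} = 1$, which produces the additive $+1$ in the statement of Theorem~\ref{k331}. The remaining terms reduce the claim to showing
$$\sum_{\gamma\in\Gamma_H}a_2(f(\gamma))-2\!\!\sum_{\substack{\gamma\in\Gamma_6\\A\notin\gamma}}\!\!a_2(f(\gamma))-\!\!\sum_{\substack{\gamma\in\Gamma_5\\A\in\gamma}}\!\!a_2(f(\gamma))=\sum_{G_i}\alpha(f|_{G_i})-4\sum_{K}\alpha(f|_K)-\sum_{H_i}\alpha(f|_{H_i}),$$
after clearing the overall factor of $2$ against the $\frac{1}{8}\cdot 8 = 1$ coefficients (so that the $G_i$ term keeps coefficient $1$, the $K$ term acquires coefficient $4$ since $\frac{1}{2}\cdot 8 = 4$, and the $H_i$ term coefficient $1$).

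\medskip

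The core of the argument is then a \emph{counting} lemma: I would expand each $\alpha(f|_{\bullet})$ using its definition $\alpha = \sum_{\gamma\in\Gamma_H}a_2 - \sum_{\gamma\in\Gamma_4}a_2$ for the relevant $K_{3,3}$ structure, and then track how many times each cycle of $f(K_{3,3,1})$ arises as a Hamiltonian or $4$-cycle across the eighteen $G_i$, the six $H_i$, and the single $K$. A cycle of $K_{3,3,1}$ either avoids the valence-$9$ vertex $A$ or contains it, and each such cycle appears as a Hamiltonian cycle (of one of the subgraphs) or as a $4$-cycle (in a subdivision, a subdivided $4$-cycle) only for those subgraphs whose edge set contains all the cycle's edges. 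The main obstacle is this bookkeeping: I must verify that, after summing with the signed multiplicities $(+1$ for $G_i$, $-4$ for $K$, $-1$ for $H_i)$, every cycle of $K_{3,3,1}$ is counted with exactly the coefficient prescribed on the left-hand side --- namely $+1$ for Hamiltonian $7$-cycles, $-2$ for $6$-cycles avoiding $A$, $-1$ for $5$-cycles through $A$, and net $0$ for all other cycle types (shorter cycles, and cycles that happen to contribute to some $\alpha$ with canceling signs). This is exactly the role that the $G_i$ and $H_i$ were engineered to play, so I would organize the verification by cycle type, checking the multiplicity for a representative cycle in each $\Gamma_m$ class and confirming the cancellations.

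\medskip

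Finally, I would assemble the pieces: substituting the counting identity into the reduced form of Lemma~\ref{sum} yields precisely the right-hand side of Theorem~\ref{k331}, completing the proof. The delicate points to watch are the correct handling of subdivided $4$-cycles in the $G_i$ (ensuring their $a_2$ equals that of the corresponding unsubdivided cycle in $f(K_{3,3,1})$) and confirming that no cycle through $A$ of length other than $5$, and no cycle avoiding $A$ of length other than $6$ or $7$, survives with a nonzero coefficient after the signed sum.
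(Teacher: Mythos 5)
Your strategy is exactly the paper's: substitute $\mathcal{L}^2=8\alpha+1$ (Proposition \ref{alpha-wu}) into Lemma \ref{sum}, check that the constants combine as $\tfrac{18-4-6}{8}=1$, and then count, cycle class by cycle class, how often each cycle of $K_{3,3,1}$ occurs as a Hamiltonian cycle or (subdivided) $4$-cycle among the eighteen $G_i$, the six $H_i$, and $K$, with signed weights $+1$, $-1$, $-4$. The cancellation pattern you predict ($6$-cycles through $A$ cancelling between the $G_i$ and $H_i$ counts, and both kinds of $4$-cycles cancelling) is correct and is precisely what the paper verifies, as is your remark that subdivision does not affect the $a_2$ of embedded cycles, which the paper also uses implicitly when applying Proposition \ref{alpha-wu} to the subdivisions $G_i$.

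There is, however, a factor-of-$2$ slip in your reduction, and your target multiplicities are halved accordingly. Since $\tfrac18\cdot 8=1$, the substitution leaves $\sum_{G_i}\alpha(f|_{G_i})-4\alpha(f|_K)-\sum_{H_i}\alpha(f|_{H_i})$ with unit coefficients, and no factor of $2$ gets ``cleared'' anywhere in that step; the identity you actually need is that this combination equals $2\bigl(\sum_{\gamma\in\Gamma_H}a_2-2\sum_{\gamma\in\Gamma_6, A\notin\gamma}a_2-\sum_{\gamma\in\Gamma_5}a_2\bigr)$, i.e.\ \emph{twice} the left-hand side of your displayed equation. Correspondingly, the true multiplicities are even: each Hamiltonian cycle of $K_{3,3,1}$ lies in exactly \emph{two} of the $G_i$ (after deleting the edge $e$ between the two edges at $A$, there are two choices of a second adjacent edge to delete), each $5$-cycle appears as a $4$-cycle in two of the $G_i$ giving coefficient $-2$, and each $6$-cycle avoiding $A$ is picked up only as a Hamiltonian cycle of $K$, with weight $-4$. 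So the surviving coefficients are $+2$, $-4$, $-2$, not the $+1$, $-2$, $-1$ you state, and these even multiplicities are exactly where the theorem's outer factor of $2$ comes from. Had you run your verification as planned, the count for a representative Hamiltonian cycle would come out $2$ and expose the inconsistency; once the intermediate display is corrected, the rest of your plan goes through and coincides with the paper's proof.
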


\begin{proof}  Let $f$ be a embedding of $K_{3,3,1}$ into $\R^3$ or $S^3$.  From Lemma \ref{sum} we know, 
$$\sum_{\gamma\in\Gamma_{3,4} (K_{3,3,1})} lk(f(\lambda))^2=\frac{1}{8}\sum_{G_i} \mathcal{L}(f|_{G_i})^2-\frac{1}{2}\sum_{K} \mathcal{L}(f|_K)^2-\frac{1}{8}\sum_{H_i} \mathcal{L}(f|_{H_i})^2.$$  
Then from Proposition \ref{alpha-wu} we see that:  $$\mathcal{L}(f)^2=8\Big(\sum_{\gamma\in\Gamma_H}a_2(f(\gamma))-\sum_{\gamma\in\Gamma_4}a_2(f(\gamma))\Big) +1.$$  
Thus

$$\frac{1}{8}\sum_{G_i} \mathcal{L}(f|_{G_i})^2-\frac{1}{2}\sum_{K} \mathcal{L}(f|_K)^2-\frac{1}{8}\sum_{H_i} \mathcal{L}(f|_{H_i})^2=$$
\begin{eqnarray*}
\Big(\sum_{\substack{\gamma\in\Gamma_H(G_i)\\G_i\in K_{3,3,1}}}a_2(f(\gamma))-\sum_{\substack{\gamma\in\Gamma_4(G_i)\\G_i\in K_{3,3,1}}}a_2(f(\gamma))\Big)
-4\Big(\sum_{\substack{\gamma\in\Gamma_H(K)}}a_2(f(\gamma))-\sum_{\substack{\gamma\in\Gamma_4(K)}}a_2(f(\gamma))\Big) \\
-\Big(\sum_{\substack{\gamma\in\Gamma_H(H_i)\\H_i\in K_{3,3,1}}}a_2(f(\gamma))-\sum_{\substack{\gamma\in\Gamma_4(H_i)\\H_i\in K_{3,3,1}}}a_2(f(\gamma))\Big)
+\frac{18-4-6}{8}.
\end{eqnarray*}
So we need only determine which cycles of $K_{3,3,1}$ are counted in the above sums, and how many times each cycle is counted.  

\noindent\underline{The $G_i$ subgraphs}

\noindent Recall that the $G_i$s are formed by taking $K_{3,3,1}$ and deleting three of the edges adjacent to $A$ and then deleting the two edges not adjacent to those already deleted edges.  This could also be thought of as taking $K_{3,3}$ deleting two adjacent edges and then adding a vertex $A$ and edges from $A$ to each of the vertices that were incident to at least one of the deleted edges.  The $G_i$ are subdivisions of $K_{3,3},$ so some of the Hamiltonian cycles of $G_i$ are Hamiltonian cycles of $K_{3,3,1}$ and some are 6-cycles.  Similarly the 4-cycles will be 5-cycles and 4-cycles in $K_{3,3,1}$.  To count these cycles we will consider different cycles in $K_{3,3,1}$ and determine how many of the $G_i$s contain a given cycle.   

Consider an arbitrary Hamiltonian cycle $\eta$ of $K_{3,3,1}$, to have $\eta$ be in $G_i$ all of the edges of $\eta$ must be in $G_i$.  In particular, the two edges incident to $A$ must be in $G_i$, for this to happen the edge between these two edges, call it $e,$ must be deleted.  In addition, another edge which is not incident to $A$ but is adjacent to $e$ must be deleted, there are two such edges which are not in $\eta$.  Thus two of the eighteen $G_i$ graphs contain $\eta$ as one of their Hamiltonian cycles.  The 6-cycles in $K_{3,3,1}$ can be broken into two sets the ones that contain the vertex $A$ and those that do not.  Since two adjacent edges neither of which are incident to $A$ must be deleted to form a $G_i$, the latter 6-cycle cannot occur.   For a 6-cycle in $K_{3,3,1}$ that contains $A$ the two vertices adjacent to $A,$ call them $v$ and $w$, must be in the same partite set.  Thus the two deleted adjacent edges not incident to $A$ must go between $v$ and $w$.  There is one such way for this to happen, thus each 6-cycle that contains $A$ appears in one of the $G_i$s as a Hamiltonian cycle.  

Every 5-cycle in $K_{3,3,1}$ contains $A$.  To have the edges to the vertex $A$, the edge between the adjacent vertices must be deleted.  As with the Hamiltonian cycles there are two ways to deleted two adjacent edges (not incident to $A$) and delete the said edge.  Thus there are two $G_i$ graphs that contain a given 5-cycle, as a 4-cycle.  Next the 4-cycles of $K_{3,3,1}$ can be put into two groups: 4-cycles that contain $A$ and 4-cycles that do not contain $A$.  By similar reasoning one can see that 4-cycles that contain $A$ will appear in two of the $G_i$s and 4-cycles that do not contain $A$ appear in six of the $G_i$s.  
\smallskip

\noindent\underline{The $K$ subgraph}

\noindent Recall that the subgraph $K$ is the $K_{3,3}$ subgraph obtained by deleting the vertex $A.$  So the Hamiltonian cycles of $K$ are the 6-cycles of $K_{3,3,1}$ that do not contain $A$.  The 4-cycles of $K$ are the 4-cycles of $K_{3,3,1}$ which  do not contain $A$.  

\smallskip
\noindent\underline{The $H_i$ subgraphs}

\noindent Recall that the $H_i$ subgraphs are the $K_{3,3}$ subgraphs that are obtained from $K_{3,3,1}$ by deleting one vertex $v\neq A$ and the two edges that are adjacent to $A$ as well as those vertices in the same partite set as the vertex $v$.  The Hamiltonian cycles of $H_i$ are all be 6-cycles in $K_{3,3,1}$ which contain $A$, as the $H_i$ are $K_{3,3}$ subgraphs with one vertex $v\neq A$ deleted.  Let $c$ be an arbitrary 6-cycle that contains $A$ and does not contain the vertex $v$.  The cycle $c$ will appear in one of the $H_i$s, that is in the $H_i$ which does not contain the vertex $v$.  Next, those 4-cycles that do not contain $A$ will appear in two of the $H_i$, one for each of the vertices that is not $A$ and is not in the said 4-cycle.  In the $H_i$s the vertex $A$ can be thought of as replacing the vertex $v$ that is deleted in the original $K_{3,3}$ subgraph.  Now the 4-cycles that contain $A$, also contain two vertices from one partite set  and one from the partite set that $A$ has now joined.  Thus there are two $H_i$ graphs that contain each 4-cycle.  
\smallskip

All together this gives:
$$\frac{1}{8}\sum_{G_i} \mathcal{L}(f|_{G_i})^2-\frac{1}{2}\sum_{K} \mathcal{L}(f|_K)^2-\frac{1}{8}\sum_{H_i} \mathcal{L}(f|_{H_i})^2=$$
\begin{eqnarray*}
\Big(2\sum_{\gamma\in\Gamma_H}a_2(f(\gamma))+\sum_{\gamma\in\Gamma_6}a_2(f(\gamma))-2\sum_{\substack{\gamma\in\Gamma_5\\A\in\gamma}}a_2(f(\gamma))-2\sum_{\substack{\gamma\in\Gamma_4\\A\in\gamma}}a_2(f(\gamma))-6\sum_{\substack{\gamma\in\Gamma_4\\A\notin\gamma}}a_2(f(\gamma))\Big)\\
-4\Big(\sum_{\substack{\gamma\in\Gamma_6\\A\notin\gamma}}a_2(f(\gamma))-\sum_{\substack{\gamma\in\Gamma_4\\A\notin\gamma}}a_2(f(\gamma))\Big) 
-\Big(\sum_{\substack{\gamma\in\Gamma_6\\A\in\gamma}}a_2(f(\gamma))-2\sum_{\substack{\gamma\in\Gamma_4\\A\notin\gamma}}a_2(f(\gamma))-2\sum_{\substack{\gamma\in\Gamma_4\\A\in\gamma}}a_2(f(\gamma))\Big)\\
+1
\end{eqnarray*}
$$=2\Big(\sum_{\gamma\in\Gamma_H}a_2(f(\gamma))-2\sum_{\substack{\gamma\in\Gamma_6\\A\notin\gamma}}a_2(f(\gamma))-\sum_{\substack{\gamma\in\Gamma_5\\A\in\gamma}}a_2(f(\gamma))\Big)+1,$$
completing our proof.  
\end{proof}

\begin{cor}\label{k331AK}  If an embedding $f$ of $K_{3,3,1}$ is  CA linked then $f$ is knotted.  
\end{cor}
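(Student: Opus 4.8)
The plan is to read Theorem~\ref{k331} as an arithmetic constraint and combine it with a parity observation. First I would record the structural fact that every pair of disjoint cycles in $K_{3,3,1}$ is a disjoint $3$-cycle and $4$-cycle. The only triangles in $K_{3,3,1}$ pass through the valence-$9$ vertex $A$, since the two size-three partite sets span a bipartite $K_{3,3}$ with no triangles; hence two disjoint triangles cannot coexist (both would need $A$), and a triangle together with a cycle on the remaining four vertices forces that second cycle to be a $4$-cycle. Thus $\Lambda(K_{3,3,1})=\Gamma_{3,4}(K_{3,3,1})$, so the links referenced in the definition of CA linked are exactly the terms on the left-hand side of Theorem~\ref{k331}.

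Next I would exploit the shape of the right-hand side. Since each $a_2(f(\gamma))$ is an integer, the quantity $N=\sum_{\gamma\in\Gamma_H}a_2(f(\gamma))-2\sum_{\gamma\in\Gamma_6,\,A\notin\gamma}a_2(f(\gamma))-\sum_{\gamma\in\Gamma_5,\,A\in\gamma}a_2(f(\gamma))$ is an integer, so Theorem~\ref{k331} shows that $\sum_{\lambda\in\Gamma_{3,4}}lk(f(\lambda))^2 = 2N+1$ is always odd. This is the key leverage: the sum of squared linking numbers can never equal $2$, only $1,3,5,\dots$.

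Then I would unpack the CA linked hypothesis. If some link $L$ in $f(K_{3,3,1})$ has $|lk(L)|\geq 2$, then that single term contributes $lk(f(L))^2\geq 4$ to the sum; if instead there are two distinct links with nonzero linking number, they give two distinct terms of $\Gamma_{3,4}$ each contributing at least $1$. Either way $\sum_{\lambda}lk(f(\lambda))^2\geq 2$, and by the parity observation this forces $\sum_{\lambda}lk(f(\lambda))^2\geq 3$. Plugging into Theorem~\ref{k331} gives $2N+1\geq 3$, so $N\geq 1$ and in particular $N\neq 0$.

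Finally I would conclude that $N\neq 0$ forces $a_2(f(\gamma))\neq 0$ for at least one of the cycles $\gamma$ appearing in $N$ (a Hamiltonian cycle, a $6$-cycle avoiding $A$, or a $5$-cycle through $A$). Since $a_2$ vanishes on the unknot, a nonzero $a_2(f(\gamma))$ means $f(\gamma)$ is a nontrivial knot, so $f$ is knotted. I expect no serious obstacle here, as the heavy lifting is already contained in Theorem~\ref{k331}; the only point requiring genuine care is the opening claim that all links are of type $\Gamma_{3,4}$, which is what guarantees that the CA linking hypothesis actually pushes the left-hand sum up to at least $2$ rather than leaving it stuck at the minimal value $1$.
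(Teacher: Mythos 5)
Your proposal is correct and follows essentially the same route as the paper: apply Theorem~\ref{k331}, observe that the CA linked hypothesis forces $\sum_{\lambda\in\Gamma_{3,4}}lk(f(\lambda))^2>1$, and conclude that some $a_2(f(\gamma))\neq 0$ among the Hamiltonian cycles, $6$-cycles avoiding $A$, and $5$-cycles through $A$, hence a nontrivial knot. Your explicit verification that $\Lambda(K_{3,3,1})=\Gamma_{3,4}(K_{3,3,1})$ (so that CA linking really does push up the left-hand sum) is a detail the paper leaves implicit, and your parity refinement, while valid, is not needed --- the inequality $2N+1>1$ with $N$ an integer already suffices.
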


\begin{proof}  If $f(K_{3,3,1})$ is  CA linked then $\sum_{\Gamma_{3,4} (K_{3,3,1})} lk(f(\lambda))^2>1$.  Thus at least one of the $a_2(\gamma)\neq 0$ for $\gamma\in 
\Gamma_H\cup \{\Gamma_6|A\not\in\gamma\}\cup\{\Gamma_5|A\in\gamma\}.$  So $f$ is knotted.    
\end{proof}

\section{Complex algebraically linked and knotted graphs}\label{landk}
In this section we prove our main theorem; given $G\in\mathcal{PF}$ if $f(G)$ is CA linked then $f(G)$ is knotted.  To simplify our discussion we will call a graph, $G$\emph{K-linked} when it has the following property, if an embedding $f$ of $G$ is  CA linked then $f$ is knotted.  So our main theorem can be restated as: All of the graphs of the Petersen family are K-linked.  Before proving this we need the following lemma.  

\begin{figure}[h]
\begin{center}
\includegraphics[scale=0.7]{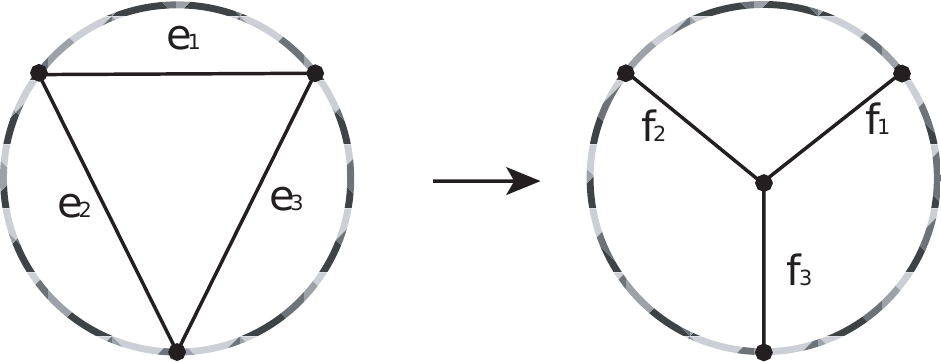}
\caption{The $\nabla$Y-moves.  }\label{TY}
\end{center}
\end{figure}

\begin{lem}\label{triY} Let $G'$ be obtained from $G$ by a $\nabla Y$-move.  If $G$ is K-linked then $G'$ is K-linked.  
\end{lem}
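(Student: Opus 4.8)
The plan is to run the argument in reverse: starting from a CA linked embedding $f'$ of $G'$, I would perform the $Y\nabla$-move geometrically to produce an embedding $f$ of $G$, transport the complex linking from $f'$ to $f$, invoke the hypothesis that $G$ is K-linked to conclude that $f(G)$ is knotted, and finally transport that knotting back to $f'(G')$. Concretely, let $z$ be the trivalent vertex of $G'$ created by the $\nabla Y$-move, with incident edges $zu, zv, zw$, so that the corresponding triangle of $G$ has vertices $u,v,w$. Given $f'$, I define $f$ inside a small ball about $f'(z)$ by deleting the image of the claw and inserting a triangle whose edge $e_{uv}$ runs parallel to $f'(zu)\cup f'(zv)$, and similarly for $e_{vw}$ and $e_{wu}$; outside this ball $f$ agrees with $f'$. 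This is the standard geometric $Y\nabla$-move, the reverse of the move in Figure \ref{TY}.

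Next I would record the cycle correspondence induced by this local modification. A cycle of $G$ either (a) uses no triangle edge, (b) uses exactly one triangle edge, (c) uses exactly two triangle edges, or (d) is the triangle itself. In cases (a)--(c) the construction makes $f(\g)$ ambient isotopic to a cycle $f'(\g')$ of $G'$: a cycle avoiding the triangle is unchanged; a cycle through $e_{uv}$ corresponds to the cycle of $G'$ through the path $u\,z\,v$; and a cycle through the path $u\,v\,w$ corresponds to the cycle of $G'$ through $u\,z\,w$, the two parallel copies of $f'(zv)$ cancelling. Because the modification is supported in a ball and each triangle edge is parallel to the corresponding claw path, corresponding cycles are ambient isotopic and the assignment $\g'\mapsto\g$ is injective. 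In particular, the linking numbers of corresponding disjoint pairs agree.

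With this dictionary in hand, transferring the CA linking is straightforward. Since $z$ lies on at most one cycle of any disjoint pair, every link $L'$ in $f'(G')$ with $lk(L')\neq 0$ corresponds to a link $L$ in $f(G)$ with the same nonzero linking number, and distinct links correspond to distinct links by injectivity. Hence if $f'$ has a link with $|lk|\geq 2$, or two links of nonzero linking number, the same is true of $f$, so $f$ is CA linked. Since $G$ is K-linked, $f(G)$ is knotted; that is, some cycle $\g$ of $G$ has $f(\g)$ nontrivial.

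Finally I would push the knot back to $G'$. The only cycle of $G$ with no image in $G'$ is the triangle itself, and its image $f(\text{triangle})$ lies in a regular neighborhood of the embedded claw $f'(z\cup zu\cup zv\cup zw)$, an unknotted arc-tree, so $f(\text{triangle})$ bounds a disk there and is trivial. Therefore the knotted cycle $\g$ is of type (a)--(c), its corresponding cycle $\g'$ satisfies $f'(\g')\cong f(\g)$, and $f'(G')$ contains the nontrivial knot $f'(\g')$; thus $f'$ is knotted, proving $G'$ is K-linked. I expect the main obstacle to be the careful verification that the parallel-edge construction really makes corresponding cycles ambient isotopic, so that both linking numbers and knot types are preserved while the disjointness of linked pairs is maintained, together with the small but essential point that the newly created triangle is always unknotted.
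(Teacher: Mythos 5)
Your proposal is correct and follows essentially the same route as the paper: you construct an embedding of $G$ from the CA linked embedding of $G'$ by placing the triangle in a regular neighborhood of the embedded $Y$ (so corresponding cycles and links are ambient isotopic and the triangle bounds a disk), transfer the complex algebraic linking to invoke the K-linked hypothesis on $G$, and transfer the resulting knotted cycle back, ruling out the triangle as the knot. The paper packages the same idea via the cycle map $\phi:\Gamma(G)\smallsetminus\triangle\to\Gamma(G')$ and link map $\psi:\Lambda(G')\to\Lambda(G)$, but the substance -- including your key observation that the new triangle is unknotted -- is identical.
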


\begin{proof} Let the edges of the triangle $\triangle$ in $G$ be labeled $e_1, e_2,$ and $e_3$, and the edges of the $Y$ in $G'$ be 
labeled $f_1, f_2,$ and $f_3,$ as shown in Figure \ref{TY}.  Let the subgraphs where the two graphs agree be denoted $E$ and $E'$, respectively.  
Define the map $\phi :\Gamma(G)\smallsetminus\triangle\to\Gamma(G'),$ if 
$e_i\not\in\gamma$ then take $\phi (\gamma)$ to be the simple closed curve in $G'$ that is defined by the corresponding edges as in $G$,  if $e_i\in\gamma$ 
then take $\phi (\gamma)$ to be the simple closed curve comprised of the edges that correspond with $\gamma\smallsetminus \{e_i\}$ together with the edges $\{f_i,f_{i+1}\},$ if $\{e_i,e_{i+1}\} \in\gamma$ then $\phi (\gamma)$ is the simple closed curve comprised of the edges that correspond with $\gamma\smallsetminus\{e_i,e_{i+1}\}$ together with the edges $\{f_i,f_{i+2}\}.$ Notice that $\phi$ is surjective.  
Define the map $\psi :\Lambda(G')\to\Lambda(G),$ if $\lambda\in E'$ then $\psi(\lambda)$ is the link consisting of the corresponding edges in $E$, if not there are two edges $f_i, f_{i+1}\in\lambda$ then $\lambda$ is mapped to the link that is comprised of the edges the correspond to $\lambda-\{f_i, f_{i+1}\}$ and the edge $\{e_i\}$.  

Now consider an embedding $f$ of $G'$ which is CA-linked.  Define an embedding $\bar{f}$ of $G$,  where $\bar{f}(E)=f(E')$  and $\triangle$ is mapped onto a tubular neighborhood of $f(Y)\subset f(G')$.  Notice that $\bar{f}(\triangle)$ bounds an embedded disk.  As an abuse of notation we will call the maps on the embeddings of $G$ and $G'$ that result from the maps $\phi$ and $\psi$ by the same names.  Since $f(G')$ is CA-linked there are some number of links $L_1,\dots,L_n\in f(G')$ with nonzero linking number.  Now $\psi(L_i)=L_i$ for all $i$ thus $\bar{f}(G)$ is also CA-linked.  By assumption this implies that $\bar{f}(G)$ is knotted.  Thus there is some simple closed curve $\gamma\in\bar{f}(G)$ which is nontrivially knotted.  Next, $\phi(\gamma)=\gamma$.  So $f(G')$ is knotted.  Therefore $G'$ is K-linked.  
\end{proof}

\noindent {\bf Theorem \ref{main}.}   \emph{If $f$ is a CA linked embedding of $G\in\mathcal{PF}$, then $f(G)$ is knotted.  Which can be restated as:  All of the graphs of the Petersen family are K-linked.  }
Gordon

\begin{proof} Let $G\in\mathcal{PF}$.  If $G=K_6$ then for any embedding $f(K_6)$, by Theorem \ref{nikkuni} \cite{Ni} that, $$\sum_{\lambda\in\Lambda(K_6)} lk(f(\lambda))^2= 2\Big(\sum_{\gamma\in\Gamma_H}a_2(f(\gamma))-\sum_{\gamma\in\Gamma_5}a_2(f(\gamma))\Big)+1.$$  If $f(K_6)$ is  CA linked then $\sum_{\lambda\in\Lambda(K_6)} lk(f(\lambda))^2>1$.  Thus at least one of the $a_2(\gamma)\neq 0$ for $\gamma\in 
\Gamma_H\cup \Gamma_5.$  So $f$ is knotted.    Next, if $G=K_{3,3,1}$ then $G$ is K-linked by Corollary \ref{k331AK}.  If $G\neq K_6$ or $K_{3,3,1}$ the $G$ can be obtained from $K_6$ or $K_{3,3,1}$ by a series of $\nabla$Y-moves, see Figure \ref{PF}.  Thus by Proposition \ref{triY}, $G$ is K-linked.  
\end{proof}

\section{Examples}\label{eg}
\begin{figure}[h]
\begin{center}
\includegraphics[scale=0.7]{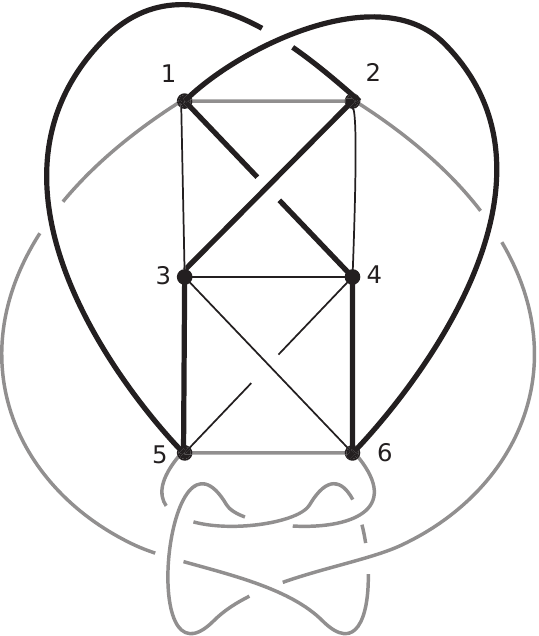}
\caption{A knotted embedding of $K_6$ which contains a single nontrivial link, shown in bold.  }\label{K6}
\end{center}
\end{figure}

In this section we consider the two questions about embeddings of the graphs of the Petersen family:  If $f$ is knotted can that imply a level of complexity in the linking?  If an embedding is not CA linked but contains more than one link or contains a link that is not the Hopf link would this imply the embedding is knotted?  First it should be noted that every embedding of $K_6$ must contain an odd number of links with odd linking number and an even number of links with even linking number, this is a consequence of Conway and Gordon \cite{CG}, where they showed that $$\sum_{\lambda\in\Lambda} lk(\lambda)=1 \mod 2.$$

\begin{figure}[h]
\begin{center}
\includegraphics[scale=0.5]{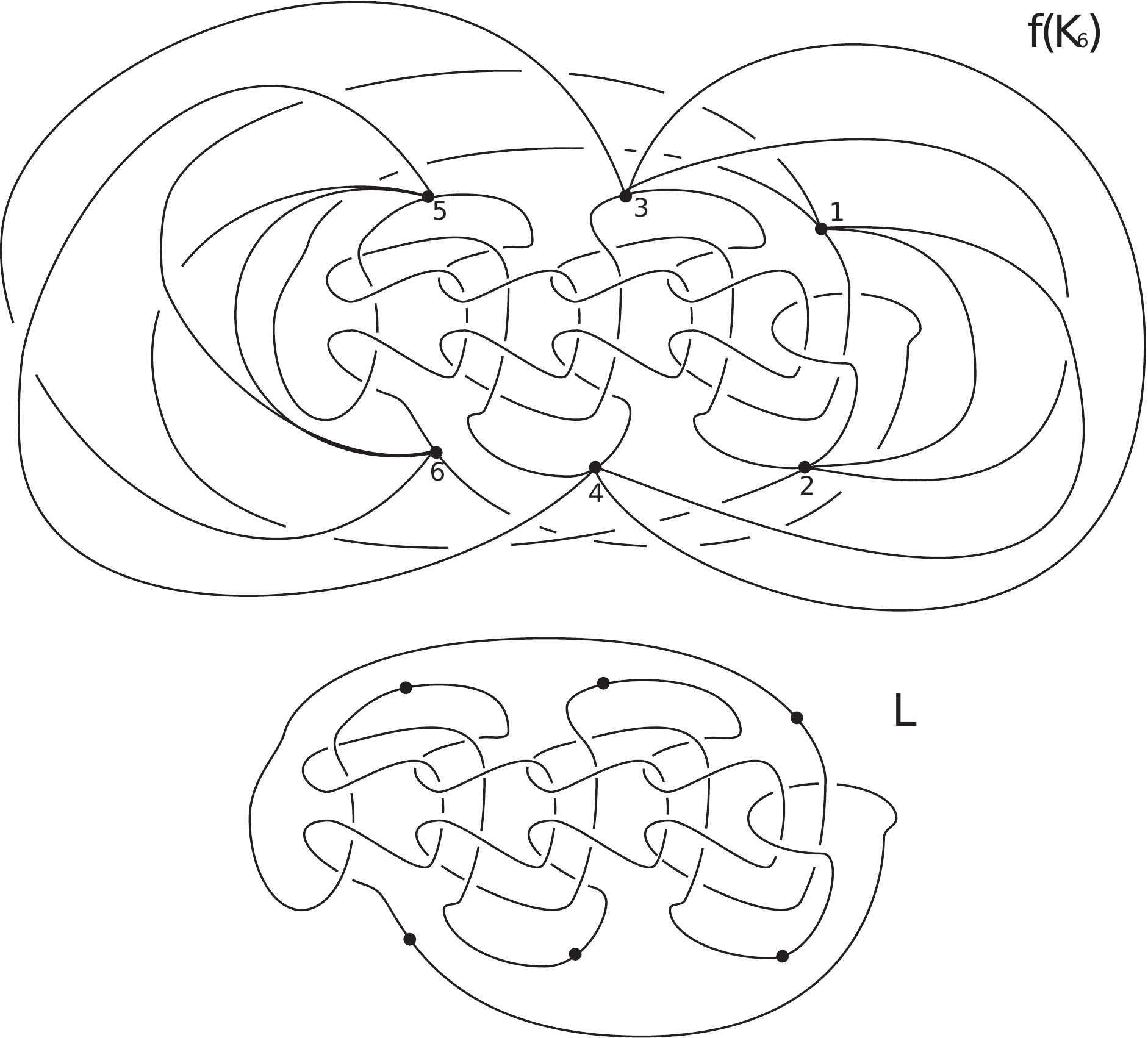}
\caption{The embedding $f(K_6)$ which contains both a Hopf link ($146\cup235$) and the link $L$, but is not knotted.  The link $L.$  }\label{K6I}
\end{center}
\end{figure}
\begin{figure}[h]
\begin{center}
\includegraphics[scale=0.5]{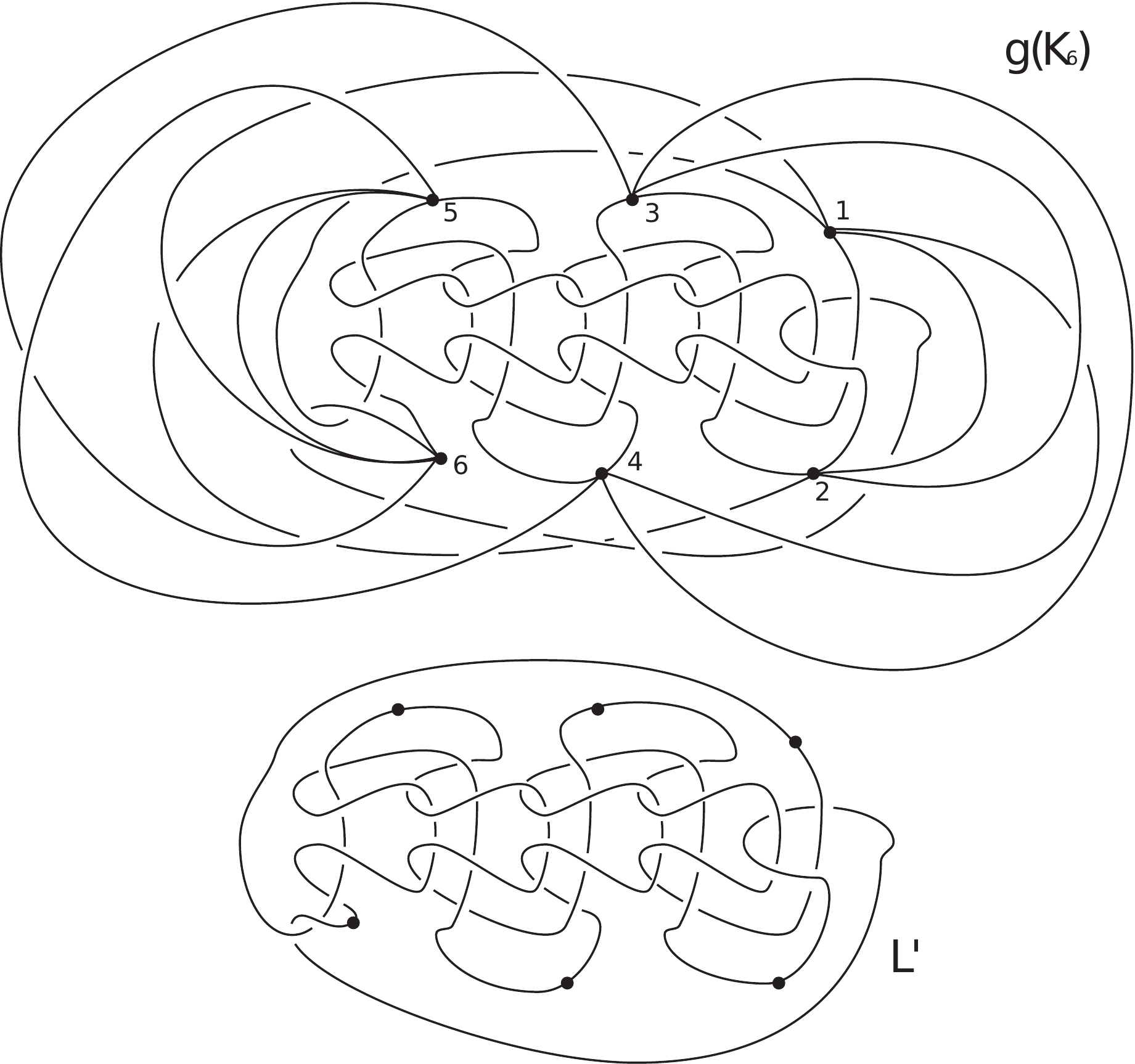}
\caption{The embedding $g(K_6)$ which contains link $L'$, but is not knotted.  The link $L'$. }\label{K6II}
\end{center}
\end{figure}

Now consider the embedding of $K_6,$ shown in Figure \ref{K6}.  This spatial graph contains a single nontrivial link in the pair of cycles 146 and 235 (shown in bold) which form a Hopf link.  This can be simply verified by checking the 10 links.  However, it contains a number of knotted cycles, many of the knots are the connected sum of two trefoils, an example is the cycle 1265.  So this is an example of a spatial graph that is knotted but does not contain any more complicated linking than a single Hopf link.  Thus having a knotted embedding does not imply any increased complexity in the linking.

Next, we will look at two embeddings of $K_6$ which are not CA linked but contain links other than the Hopf link.  The embedding $f(K_6),$ shown in Figure \ref{K6I} contains a Hopf link in the cycles 146 and 235, and the nontrivial link $L$ with linking number 0 (shown in Figure \ref{K6I}) in the cycles 135 and 246.  
\begin{ob} The spatial graph $f(K_6)$ is not knotted.  
\end{ob}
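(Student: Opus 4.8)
The plan is a direct but organized finite verification that no cycle of $f(K_6)$ is knotted. The essential reduction is that the knot type of a cycle $\gamma$ depends only on the crossings of the diagram occurring among the edges making up $\gamma$ (where an edge may also cross itself); crossings between an edge of $\gamma$ and an edge outside $\gamma$ are irrelevant, since the knot type of $\gamma$ is read off from the subdiagram obtained by erasing every edge not in $\gamma$. Thus for each cycle I would extract its subdiagram from Figure \ref{K6I} and exhibit it as the unknot. There are only finitely many cycles to consider, namely the $20$ triangles, $45$ four-cycles, $72$ five-cycles, and $60$ Hamiltonian cycles, for $197$ in all, so the check terminates.

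First I would locate all crossings of the diagram in Figure \ref{K6I} and record, for each crossing, the (unordered) pair of edges involved. I expect the crossings to be localized: the embedding should be a nearly planar base modified by clasps and twists introduced precisely to create the Hopf link $146\cup 235$ and the linking-number-zero link $L=135\cup 246$. Any cycle whose edge set meets no crossing is then automatically unknotted, and all such cycles can be dismissed at once. This leaves only the cycles threading the clasped or twisted regions, which should be few; I would exploit any symmetry of the embedding that preserves these modifications to collapse the survivors into a handful of representative cases, and then reduce each representative's self-crossing subdiagram to the unknot by Reidemeister moves (recognizing, for instance, that a clasp traversed by only one strand of a cycle, or a pair of opposite-sign half-twists, simplifies away).

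As a consistency check I note Theorem \ref{nikkuni}: since the only nonzero contribution to $\sum_{\lambda\in\Lambda} lk(f(\lambda))^2$ comes from the single Hopf link, the sum equals $1$ (as $L$ has linking number $0$), whence $\sum_{\gamma\in\Gamma_H}a_2(f(\gamma))=\sum_{\gamma\in\Gamma_5}a_2(f(\gamma))$. This is consistent with, but does not by itself imply, knotlessness, since $a_2$ neither detects every knot nor rules out cancellation of contributions across cycles; so the explicit verification remains necessary. The main obstacle I anticipate is bookkeeping rather than any single hard deduction: ensuring completeness over all $197$ cycles and correctly transporting the symmetry reduction, so that no cycle passing through the clasps is overlooked. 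The genuinely nontrivial content is confined to the few cycles whose subdiagram actually carries crossings.
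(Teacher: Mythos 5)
Your strategy is sound in principle --- reducing each cycle to the subdiagram obtained by erasing all other edges is a valid reduction, your count of $197$ cycles is correct, and an exhaustive check organized by crossing-locality and symmetry would prove the observation --- but it is a genuinely different and far more laborious route than the paper's, and its decisive step is left unexecuted. The paper avoids any cycle-by-cycle enumeration with two structural ideas. First, $f(K_6)$ is obtained from the knotted embedding of Figure \ref{K6} by replacing the sublink $135\cup 246$ with $L$, placed \emph{below} all the other edges; in the Figure \ref{K6} embedding every knotted cycle must use the edges $15$ and $26$, because only three crossings of that diagram avoid these two edges, a knotted cycle avoiding them would have to contain all three crossings, and the unique cycle through all three is $145236$, which is unknotted. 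Since $15$ and $26$ belong to $135\cup 246$, any knot in $f(K_6)$ must draw its crossings from the edges of $L$, the only place the embeddings differ. Second --- and this is precisely the step your plan defers to ad hoc Reidemeister reductions of ``a handful of survivors'' --- the link $L$ has a Brunnian-type property: delete any one of its six edges and the remaining edges can be isotoped, with vertices fixed and without passing over or around vertices, to a crossing-free position. Since no single cycle of $K_6$ can contain all six edges of the two disjoint triangles $135$ and $246$, no cycle acquires \emph{any} crossing from the $L$-region, so every potentially knotted subdiagram is crossing-free after an isotopy, in one uniform stroke. Your proposal is not wrong, but all of its genuine content (identifying the crossings, and unknotting the cycles that meet them) is promised rather than performed, and without something like the delete-one-edge property of $L$ that residual case analysis is exactly where the entire difficulty of the observation lives; your consistency check via Theorem \ref{nikkuni} is correctly flagged as suggestive but non-conclusive, since $a_2$ can vanish on knots and contributions can cancel across cycles.
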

The embedding $f(K_6)$ can be obtained from the embedding in Figure \ref{K6}, by replacing the link $135\cup 246$ with the link $L,$ where $L$ is placed below the other edges.  Notice that all of the knotted cycles in the spatial graph in Figure \ref{K6} contain the edges 15 and 26.  To see this, notice that there are only three crossings that do not involve at least one of the edges 15 or 26, so for there to be a knot without them all of these crossings must be part of the cycle.  But there is only one cycle that contains all of them that is 145236, which is the unknot.  So for there to be a knot in $f(K_6)$ it must contain some of the edges of $L$ because that is where the embeddings differ.  Next the link $L$ is such that if any of the edges is deleted the remaining edges can be isotoped with the vertices fixed and without moving the edges over or around the vertices, so that there are no crossings in the remaining edges.  So the only way to have additional crossings from those edges in $L$ is to have all of them, but together all of the edges make the link $L$.

The second embedding $g(K_6),$ shown in Figure \ref{K6II}, contains a single nontrivial link $L'$ with $|lk(L')|=1$, which is not the Hopf link (shown in Figure \ref{K6II}) in the cycles 135 and 246. In a similar way, in can be seen that $g(K_6)$ is not knotted.  
Gordon
These two examples show embeddings where there is more complex linking but there is not higher linking number, however neither are knotted.  Thus the addition of complexity in these embeddings is not enough to result in a knotted embedding.

\end{document}